\def\NZQ{\Bbb}               % the font for N,Z,Q,R,C
\def\NN{{\NZQ N}}
\def\frk{\frak}               % font for "Fraktur"
\def\Phi{{\frk n}}
\def\Phi{{\frk N}}
\def\MC{{\mathcal C}}
\def\MP{{\mathcal P}}
\def\MF{{\mathcal F}}
\def\ML{{\mathcal L}}
\def\MW{{\mathcal W}}
\def\MM{{\mathcal M}}
\def\MH{{\mathcal H}}
\def\opn#1#2{\def#1{\operatorname{#2}}} % to make operators
\opn\chara{char} \opn\length{\ell} \opn\pd{pd} \opn\rk{rk}
\opn\projdim{proj\,dim} \opn\injdim{inj\,dim} \opn\rank{rank}
\opn\depth{depth} \opn\grade{grade} \opn\height{height}
\opn\embdim{emb\,dim} \opn\codim{codim}
\opn\Tr{Tr} \opn\bigrank{big\,rank}
\opn\superheight{superheight}\opn\lcm{lcm}
\opn\trdeg{tr\,deg}%\emph{
\opn\reg{reg} \opn\lreg{lreg} \opn\ini{in} \opn\lpd{lpd}
\opn\size{size}\opn\bigsize{bigsize}
\opn\cosize{cosize}\opn\bigcosize{bigcosize}
\opn\sdepth{sdepth}\opn\sreg{sreg}
\opn\link{link}\opn\fdepth{fdepth}
\opn\div{div} \opn\Div{Div} \opn\cl{cl} \opn\Cl{Cl}
\opn\Spec{Spec} \opn\Supp{Supp} \opn\supp{supp} \opn\Sing{Sing}
\opn\Ass{Ass} \opn\Min{Min}\opn\Mon{Mon} \opn\dstab{dstab} \opn\astab{astab}
\opn\Ann{Ann} \opn\Rad{Rad} \opn\Soc{Soc}
\opn\Im{Im} \opn\Ker{Ker} \opn\Coker{Coker} \opn\Am{Am}
\opn\Hom{Hom} \opn\Tor{Tor} \opn\Ext{Ext} \opn\End{End}
\opn\Aut{Aut} \opn\id{id}
\opn\nat{nat}
\opn\pff{pf}%   \pf exists already
\opn\Pf{Pf} \opn\GL{GL} \opn\SL{SL} \opn\mod{mod} \opn\ord{ord}
\opn\Gin{Gin} \opn\Hilb{Hilb}\opn\sort{sort}
\opn\aff{aff} \opn\con{conv} \opn\relint{relint} \opn\st{st}
\opn\lk{lk} \opn\cn{cn} \opn\core{core} \opn\vol{vol}
\opn\link{link} \opn\star{star}\opn\lex{lex}
\opn\cdeg{cdeg}
\opn\T{T}
\opn\gr{gr}
\def\pot#1#2{#1[\kern-0.28ex[#2]\kern-0.28ex]}
\opn\dirlim{\underrightarrow{\lim}}
\opn\inivlim{\underleftarrow{\lim}}
\def\Implies{\ifmmode\Longrightarrow \else
        \unskip${}\Longrightarrow{}$\ignorespaces\fi}
\def\implies{\ifmmode\Rightarrow \else
        \unskip${}\Rightarrow{}$\ignorespaces\fi}
\def\iff{\ifmmode\Longleftrightarrow \else
        \unskip${}\Longleftrightarrow{}$\ignorespaces\fi}
\theoremstyle{plain}
\newtheorem{theorem}{Theorem}[section]
\newtheorem{lemma}[theorem]{Lemma}
\newtheorem{proposition}[theorem]{Proposition}
\newtheorem{corollary}[theorem]{Corollary}
\newtheorem{conjecture}[theorem]{Conjecture}
\newtheorem*{theorem-q}{Theorem}
\newtheorem*{corollary-q}{Corollary}
\newtheorem*{question-q}{Question}
\newtheorem*{questions-q}{Questions}
\theoremstyle{definition}
\newtheorem{definition}[theorem]{Definition}
\newtheorem{example}[theorem]{Example}
\theoremstyle{remark}
\newtheorem{remark}[theorem]{Remark}
\newcommand{\W}{\mathcal{W}}
\newcommand{\HH}{\mathcal{H}}
\renewcommand{\phi}{\varphi}
\title{Primality of multiply connected polyominoes}
\author[Carla Mascia]{Carla Mascia}
\address[Carla Mascia]{Department of Mathematics\\
University of Trento\\
via Sommarive, 14\\
38123 Povo (Trento), Italy}
\email{carla.mascia@unitn.it}
\author[Giancarlo Rinaldo]{Giancarlo Rinaldo}
\address[Giancarlo Rinaldo]{Department of Mathematics\\
University of Trento\\
via Sommarive, 14\\
38123 Povo (Trento), Italy}
\email{giancarlo.rinaldo@unitn.it}
\author[Francesco Romeo]{Francesco Romeo}
\address[Francesco Romeo]{Department of Mathematics\\
University of Trento\\
via Sommarive, 14\\
38123 Povo (Trento), Italy}
\email{francesco.romeo-3@unitn.it}
\begin{document}
\begin{abstract}
It is known that the polyomino ideal of simple polyominoes is prime. In this paper, we focus on multiply connected polyominoes, namely polyominoes with holes, and observe that the non-existence of a certain sequence of inner intervals of the polyomino, called zig-zag walk, gives a necessary condition for the primality of the polyomino ideal. Moreover, by computational approach, we prove that for all polyominoes with rank less than or equal to $14$ the above condition is also sufficient. Lastly, we present an infinite new class of prime polyomino ideals.
\end{abstract}

\maketitle

\section{Introduction}
Polyominoes are two-dimensional objects obtained by joining edge by edge squares of same size. They are studied from the point of view of combinatorics, e.g. in tiling problems of the plane, as well as from the point of view of commutative algebra, e.g. associating binomial ideals to polyominoes. The latter have been introduced by Qureshi in \cite{Qu}. In particular, she introduces a binomial ideal generated by the inner $2$-minors of a polyomino, called polyomino ideal. We refer the reader to Section \ref{sec:pre} for the notation.

Two pending and of interest questions regarding polyomino ideals are to classify those that are prime and to prove if they are radical ideals. In this work, we focus on the first question, giving a partial answer in terms of their geometric realization. A polyomino is briefly called prime if its polyomino ideal is prime.
In \cite{HM}, \cite{HQS} and \cite{QSS}, the authors prove that a polyomino is prime if and only if it is balanced, and that the simple polyominoes are prime. A simple polyomino is a polyomino without holes. Whereas, polyominoes having one or more holes are called multiply connected polyominoes, using the terminology adopted in \cite{Go}, an introductory book on polyominoes. %pg.74

In general, giving a complete characterization of the primality of multiply connected polyomino ideals seems to be not so easy. A family of prime polyominoes obtained by removing a convex polyomino by a given rectangle has been showed in \cite{HQ} and \cite{Sh1}.
  
In  Section \ref{sec: toric}, we give a necessary condition for the primality of the polyomino ideal with respect to the geometric representation of the polyomino. This condition is related to a sequence of inner intervals contained in the polyomino, called a zig-zag walk (see Definition \ref{Def:zigzag}), whose existence determines the non-primality of the polyomino ideal. 

It is known that a polyomino ideal that is prime is a toric ideal. We present a toric ideal associated to a polyomino, generalizing Shikama's construction in \cite{Sh1}. This toric ideal contains the polyomino ideal (see Proposition \ref{Lemma: I_P subseteq J_P}). Moreover, if the polyomino contains a zig-zag walk, the binomial associated to the zig-zag walk belongs to the toric ideal and the above inclusion is strict.

The condition on zig-zag walks gives us a good filtration of primality. In fact, as an application, by implementing the algorithm described in \cite{MRR}, we compute all the polyominoes with rank less than or equal to 14 that are 123851 (for a complete description of the algorithm see \cite{MRR}). By computational approach, using \texttt{Macaulay2} \cite{M2}, we obtain the following 
  \begin{theorem}
  Let $\MP$ be a polyomino with $\rank(\MP)$ $\leq 14$. The following conditions are equivalent:
  \begin{enumerate}    
  \item the polyomino ideal $I_\MP$ is prime;
  \item $\MP$ contains no zig-zag walk.
  \end{enumerate}
  \end{theorem}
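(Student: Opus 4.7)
The implication (1) $\Rightarrow$ (2) is the general necessary condition supplied by Section \ref{sec: toric} and does not use the rank bound. If $\MP$ contained a zig-zag walk, Proposition \ref{Lemma: I_P subseteq J_P} and the subsequent discussion would exhibit a toric (hence prime) ideal $J_\MP$ with $I_\MP \subsetneq J_\MP$, the binomial attached to the walk being the explicit witness to the strict inclusion, so $I_\MP$ could not itself be prime. The whole force of the theorem therefore lies in the converse (2) $\Rightarrow$ (1) under the restriction $\rank(\MP) \leq 14$, and the plan is to verify it by an exhaustive computer-assisted census in three stages.

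First, I would apply the enumeration algorithm of \cite{MRR} to produce, up to congruence of polyominoes, the complete list of the $123851$ polyominoes with $\rank(\MP) \leq 14$. Second, for each polyomino on the list, I would run a combinatorial routine that searches through ordered tuples of inner intervals satisfying the conditions of Definition \ref{Def:zigzag} and records whether $\MP$ contains a zig-zag walk; those that do are discarded, since they are already covered by the (already established) implication (1) $\Rightarrow$ (2). Third, for each surviving polyomino, I would build the ideal $I_\MP$ in a polynomial ring over $\QQ$ and invoke the primality routine of \texttt{Macaulay2} \cite{M2}. If every such $I_\MP$ tests as prime, the equivalence is established for $\rank(\MP) \leq 14$.

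The challenges are almost entirely practical. Step one is itself a nontrivial enumeration whose feasibility relies on the algorithm of \cite{MRR}. In step two, a naive search for zig-zag walks over all ordered tuples of inner intervals would blow up combinatorially, so an incremental backtracking strategy (extending a partial sequence one interval at a time and aborting as soon as a zig-zag axiom is violated) is essential. Step three is the real bottleneck: Gröbner-basis primality testing of a binomial ideal whose number of variables grows with $\rank(\MP)$ is costly in the worst case, and for a run of this scale one should also prune the input by skipping polyominoes that are already known to be prime from the literature---simple polyominoes (\cite{HM}, \cite{HQS}, \cite{QSS}) and the families of \cite{HQ} and \cite{Sh1}---so that effort is concentrated on the genuinely new cases. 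Provided this census goes through, no polyomino without a zig-zag walk fails to be prime within the chosen range, and the theorem follows.
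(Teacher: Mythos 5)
Your plan coincides with the paper's. For $(2) \Rightarrow (1)$ the paper runs exactly the same exhaustive census: enumerate the $123851$ multiply connected polyominoes of rank at most $14$ using \cite{MRR}, test primality of each $I_\MP$ in \texttt{Macaulay2}, and compare against a zig-zag search. The only cosmetic difference is that the paper first isolates the subset $\mathrm{NP}$ of non-prime cases and then verifies that each member of $\mathrm{NP}$ admits a zig-zag walk, whereas you first discard the polyominoes containing a zig-zag walk and verify primality of the survivors; these are contrapositive formulations of the same check and are logically equivalent.

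One inference in your sketch of $(1) \Rightarrow (2)$ does not stand as written: from $I_\MP \subsetneq J_\MP$ with $J_\MP$ toric (hence prime), you conclude that $I_\MP$ cannot be prime. But a prime ideal can be properly contained in another prime ideal, so strictness of the inclusion by itself establishes nothing about the primality of the smaller ideal. The argument the paper actually invokes is Proposition \ref{Prop: if exists zig-zag then not prime} together with Corollary \ref{Cor: if zig-zag then non-prime}: from a zig-zag walk $\W$ one explicitly constructs $\tilde{f} = \pm\, x_{v_1} f_\W \in I_\MP$, while neither $x_{v_1}$ nor $f_\W$ lies in $I_\MP$, exhibiting zerodivisors of $\mathbb{K}[\MP]$. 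That is the result you should cite for $(1) \Rightarrow (2)$, not the strict containment $I_\MP \subsetneq J_\MP$.
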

  
In the last section, we observe that removing $5$ squares in a particular position from a given rectangle, we obtain a polyomino with a zig-zag walk (see Figure \ref{fig:grid} (B)). Moreover, we define a new infinite family of polyominoes that we call grid polyominoes, that are obtained by removing rectangular holes by a given rectangle in a way that avoids the existence of zig-zag walks. We prove that grid polyominoes are primes.

Therefore, the natural conjecture arises:
\begin{conjecture}
  Let $\MP$ be a polyomino. The following conditions are equivalent:
  \begin{enumerate}      
    \item the polyomino ideal $I_\MP$ is prime;
  \item $\MP$ contains no zig-zag walks.
\end{enumerate}
\end{conjecture}

\section{Preliminaries}\label{sec:pre}

In this section we recall definitions and notation first introduced by Qureshi in \cite{Qu}. Let $a = (i, j), b = (k, \ell) \in \NN^2$, with $i	\leq k$ and $j\leq\ell$, the set $[a, b]=\{(r,s) \in \NN^2 : i\leq r \leq k \text{ and } j \leq s \leq \ell\}$ is called an \textit{interval} of $\NN^2$. If $i<k$ and $j < \ell$, $[a,b]$ is called a \textit{proper interval}, and the elements $a,b,c,d$ are called corners of $[a,b]$, where $c=(i,\ell)$ and $d=(k,j)$. In particular, $a,b$ are called \textit{diagonal corners} and $c,d$ \textit{anti-diagonal corners} of $[a,b]$. The corner $a$ (resp. $c$) is also called the left lower (resp. upper) corner of $[a,b]$, and $d$ (resp. $b$) is the right lower (resp. upper) corner of $[a,b]$. %The sets $\{a, c\}$, $\{a, d\}$, $\{b,c\}$ and $\{b,d\}$ are the \textit{edges} of $[a,b]$. 
A proper interval of the form $C = [a, a + (1, 1)]$ is called a \textit{cell}. Its vertices $V(C)$ are $a, a+(1,0), a+(0,1), a+(1,1)$ and its edges $E(C)$ are 
\[
 \{a,a+(1,0)\}, \{a,a+(0,1)\},\{a+(1,0),a+(1,1)\},\{a+(0,1),a+(1,1)\}.
\]
Let $\MP$ be a finite collection of cells of $\NN^2$, and let $C$ and $D$ be two cells of $\MP$. Then $C$ and $D$ are said to be \textit{connected}, if there is a sequence of cells $C = C_1,\ldots, C_m = D$ of $\MP$ such that $C_i\cap C_{i+1}$ is an edge of $C_i$
for $i = 1,\ldots, m - 1$. In addition, if $C_i \neq C_j$ for all $i \neq j$, then $C_1,\dots, C_m$ is called a \textit{path} (connecting $C$ and $D$). A collection of cells $\MP$ is called a \textit{polyomino} if any two cells of $\MP$ are connected. We denote by $V(\MP)=\cup _{C\in \MP} V(C)$ the vertex set of $\MP$. The number of cells of $\MP$ is called the \textit{rank} of $\MP$, and we denote it by $\rank(\MP)$.

 A proper interval $[a,b]$ is called an \textit{inner interval} of $\MP$ if all cells of $[a,b]$ belong to $\MP$.
We say that a polyomino $\MP$ is \textit{simple} if for any two cells $C$ and $D$ of $\NN^2$ not belonging to $\MP$, there exists a path $C=C_{1},\dots,C_{m}=D$ such that $C_i \notin \MP$ for any $i=1,\dots,m$.  If the polyomino is not simple then it is multiply connected (see \cite{Go}).

A finite collection $\MH$ of cells not in $\MP$ is called a \emph{hole} of $\MP$, if any two cells  in $\MH$ are connected through a path of cells in $\MH$, and $\MH$ is maximal with respect to the inclusion. Note that a hole $\MH$ of a polyomino $\MP$ is itself a simple polyomino.

Following \cite{HQS}, an interval $[a,b]$ with $a = (i,j)$ and $b = (k, \ell)$ is called a \textit{horizontal edge interval} of $\MP$ if $j =\ell$ and the sets $\{(r, j), (r+1, j)\}$ for
$r = i, \dots, k-1$ are edges of cells of $\MP$. If a horizontal edge interval of $\MP$ is not
strictly contained in any other horizontal edge interval of $\MP$, then we call it \textit{maximal horizontal edge interval}. Similarly, one defines vertical edge intervals and maximal vertical edge intervals of $\MP$.

Let $a=(a_1,a_2)$ and $b=(b_1,b_2)\in V(\MP)$, we define on the vertices of $\MP$ the following total order: $a < b$ if $a_1<b_1$ or $a_1=b_1$ and $a_2<b_2$.
%$a <^2 b$  if $b_1<a_1$ or $a_1=b_1$ and $a_2<b_2$.

Let $\MP$ be a polyomino, and let $\mathbb{K}$ be a field. We denote by $S$ the polynomial over $\mathbb{K}$ with variables $x_v$, where $v \in V (\MP)$. The binomial $x_a x_b - x_c x_d\in S$ is called an \textit{inner 2-minor} of $\MP$ if $[a,b]$ is an inner interval of $\MP$, where $c,d$ are the anti-diagonal corners of $[a,b]$. We denote by $\MM$ the set of all inner 2-minors of $\MP$. The ideal $I_\MP\subset S$ generated by $\MM$ is called the \textit{polyomino ideal} of $\MP$. We also set $\mathbb{K} [\MP] = S/I_\MP$ .

\section{The toric ring of generic Polyominoes and zig-zag walks}\label{sec: toric}
Let $\MP$ be a polyomino. Let $S=\mathbb{K}[x_v | v \in V(\MP)]$ and $I_{\MP} \subset S$ the polyomino ideal associated to $\MP$. 
%We consider on $\mathbb{N}^2$ the natural partial order defined as follows: $a=(i, j) < b=(k,\ell)$ if $i < k$ or $i = k$ and $j < \ell$. On the variables of $S$, we define the following monomial order: we say that $x_a > x_b$ if $a < b$. \\
Let $\MH$ be a hole of $\MP$. We call \textit{lower left corner} $e$ of $\HH$ the minimum, with respect to $<$, of the vertices of $\HH$. 

Let $\HH_1, \dots, \HH_r$ be holes of $\MP$. For $k=1, \dots, r$, we denote by $e_k =(i_k,j_k)$ the lower left corner of $\HH_k$. For $k \in K =\{1, \dots, r\}$, we define the following subset of $V(\MP)$
\[
\MF_k = \{(i,j) \in V(\MP) \ \mid \ i \leq i_k \text{ and } j \leq j_k\}.
\]
Let $\{V_i\}_{i \in I}$ be the set of all the maximal vertical edge intervals of $\MP$, and $\{H_j\}_{j \in J}$ be the set of all the maximal horizontal edge intervals of $\MP$. Let $\{v_i\}_{i \in I}, \{h_j\}_{j \in J}$, and $\{w_k\}_{w \in K}$ be three sets of variables associated to $\{V_i\}_{i \in I}, \{H_j\}_{j \in J}$, and $\{\MF_k\}_{k\in K}$, respectively.
We consider the map
\begin{align*}
\alpha: V(\MP) &\longrightarrow \mathbb{K}[\{h_i, v_j,w_k\}\mid i \in I, j\in J, k \in K] \\
a & \longmapsto  \prod_{a \in H_i \cap V_j} h_i v_j \prod_{a \in \MF_k} w_k
\end{align*}
The \textit{toric ring} $T_\MP$ associated to $\MP$  is defined as $T_{\MP} = \mathbb{K}[ \alpha(a) | a \in V(\MP)] \subset \mathbb{K}[\{h_i, v_j,w_k\}\mid i \in I, j\in J, k \in K]$. The homomorphism 
\begin{align*}
\varphi: S &\longrightarrow T_{\MP} \\
x_a &\longmapsto \alpha(a)
\end{align*}
is surjective and the \textit{toric ideal} $J_{\MP}$ is the kernel of $\varphi$. The toric ring $T_{\MP}$ is viewed as a standard graded $\mathbb{K}$-algebra and, therefore, the corresponding toric ideal $J_{\MP}$ is standard graded.

%In the rest of this section, we give necessary conditions for the primality of $I_{\MP}$.
By definition, $J_\MP$ is a prime ideal containing $I_\MP$. Moreover, the next result shows that for any polyomino $\MP$, $(J_{\MP})_2$, the homogeneous part of degree 2 of $J_{\MP}$, is equal to $I_{\MP}$, that means that the minimal generators of $I_{\MP}$ are all and only the minimal generators of degree 2 of $J_{\MP}$.

\begin{lemma}\label{Lemma: I_P subseteq J_P}
Let $\MP$ be a polyomino. Then $I_{\MP} =(J_{\MP})_2$.
\end{lemma}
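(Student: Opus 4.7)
The plan is to prove the two inclusions $I_\MP \subseteq J_\MP$ (in degree~$2$) and $(J_\MP)_2 \subseteq I_\MP$ separately. For the first, I would fix an inner interval $[a,b]$ with diagonal corners $a=(i,j), b=(k,\ell)$ and anti-diagonal corners $c=(i,\ell), d=(k,j)$ and verify that $\alpha(a)\alpha(b)=\alpha(c)\alpha(d)$. The $h$- and $v$-factors match because $a,d$ share a common maximal horizontal edge interval (as do $c,b$), $a,c$ share a common maximal vertical one (as do $d,b$), and each vertex of $\MP$ belongs to exactly one maximal horizontal and exactly one maximal vertical edge interval; the latter uniqueness follows because two maximal horizontal intervals at the same height sharing a vertex could be merged. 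For the $w$-factors, the equality reduces, for each hole $\HH_k$, to the boolean identity
\[
(P-R)(Q-S)=0, \qquad P=[i\le i_k],\ Q=[j\le j_k],\ R=[k\le i_k],\ S=[\ell\le j_k],
\]
whose only potential failure is the configuration $i\le i_k<k,\ j\le j_k<\ell$; this is ruled out because then the hole cell with lower-left corner $e_k=(i_k,j_k)$ would lie inside the inner interval $[a,b]\subseteq \MP$.

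For the reverse inclusion, I would first observe that $\varphi$ is injective on $S_1$: the unique maximal horizontal and vertical edge intervals through any vertex $v$ meet only at $v$, so the monomials $\alpha(v)$ are pairwise distinct and hence $\mathbb{K}$-linearly independent in $T_\MP$, giving $(J_\MP)_1=0$. Combined with the fact that $J_\MP$ is toric and therefore generated by pure binomials, this means $(J_\MP)_2$ is spanned as a $\mathbb{K}$-vector space by quadratic binomials $x_a x_b - x_c x_d$ with $\alpha(a)\alpha(b)=\alpha(c)\alpha(d)$. A short case analysis on the matchings of the multisets of horizontal and vertical edge intervals of $\{a,b\}$ versus $\{c,d\}$ then shows that either $\{a,b\}=\{c,d\}$, yielding a zero binomial, or the four vertices form the corners of a combinatorial rectangle with $a,b$ diagonal and $c,d$ anti-diagonal; in the rectangular case the interval matchings force the entire perimeter of $[a,b]$ to consist of edges of cells of $\MP$.

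The main obstacle is to upgrade this perimeter condition to the statement that $[a,b]$ is itself an inner interval. My plan is a topological argument: if a hole $\HH_k$ has a cell inside $[a,b]$, then $\HH_k$ must lie entirely inside $[a,b]$, because an edge-path of hole cells from inside to outside would place two adjacent non-$\MP$ cells on either side of a boundary edge, contradicting the established fact that every boundary edge is an edge of a cell of $\MP$. Consequently the hole cell $[e_k,e_k+(1,1)]$ is inside $[a,b]$, forcing $i\le i_k<k$ and $j\le j_k<\ell$; but the matching of the $w_k$-factor in $\alpha(a)\alpha(b)=\alpha(c)\alpha(d)$, rewritten as $(P-R)(Q-S)=0$ exactly as above, precisely forbids this configuration. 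Therefore no hole meets $[a,b]$, so $[a,b]$ is an inner interval, $x_a x_b - x_c x_d$ is an inner $2$-minor of $\MP$, and it lies in $I_\MP$.
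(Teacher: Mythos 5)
Your proof is correct and follows essentially the same route as the paper: in one direction, match the $h$-, $v$-, and $w$-exponents of $\alpha$ across the corners of an inner interval; in the other, use the exponent equations to show a degree-$2$ binomial of $J_\MP$ comes from a rectangle whose perimeter lies in $\MP$, and then use the $w_k$-exponent to rule out a hole inside. The only notable (and welcome) difference is cosmetic: your identity $(P-R)(Q-S)=0$ packages the paper's three-case analysis on the position of the hole's lower-left corner $e_k$ relative to $a$ and $d$ into a single algebraic computation, which both directions of the argument then reuse.
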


\begin{proof}
First of all we show that $I_{\MP} \subseteq (J_{\MP})_2$. Let $f \in \MM$, with $f=x_ax_b -x_cx_d$. Since $[a,b]$ is an inner interval of $\MP$, the corners $a$ and $d$ (resp. $b$ and $c$) lie on the same horizontal edge interval $H_i$ (resp. $H_j$). In the same way, it holds that $a$ and $c$ (resp. $b$ and $d$) lie on the same vertical edge interval $V_l$ (resp. $V_m$). Therefore,
\begin{equation}\label{Eq: varphi x_ax_b}
\varphi(x_ax_b) = h_ih_jv_lv_m\prod_{k=1,\dots,r} w_k^{p_k}
\end{equation}
and 
\begin{equation}\label{Eq: varphi x_cx_d}
\varphi(x_cx_d) = h_ih_jv_lv_m \prod_{k=1,\dots,r}w_k^{n_k}
\end{equation}
for some $p_k, n_k \in \{0,1,2\}$. We have to show that for any $k \in \{1,\ldots ,r\}$ $p_k=n_k$.
If $\MP$ has not holes then $\varphi(x_ax_b) = \varphi(x_cx_d)$, and $f \in J_{\MP}$. Suppose that $\MH_1, \dots, \MH_r$ are holes of $\MP$ and consider $\MH_k$ for $k=1, \dots,r$.
Observe that the left lower corner $e_k$ of $\MH_k$ satisfies one of the following
\begin{enumerate}
\item $e_k < a$; 
\item $a \leq e_k \leq d$;
\item $d < e_k$.
\end{enumerate}
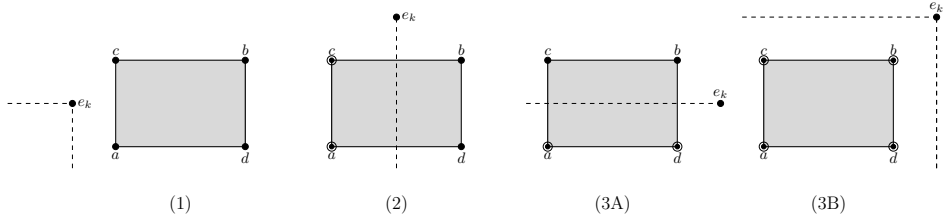
\begin{figure}[h]
\centering
   \resizebox{1 \textwidth}{!}{
  \begin{tikzpicture}
   %%%%Figura 1%%%%
   \draw[thick] (0-5,0)--(3-5,0);
  \draw[thick] (0-5,0)--(0-5,2);
  \draw[thick] (3-5,2)--(3-5,0);
   \draw[thick] (3-5,2)--(0-5,2);
  
  \fill[fill=gray, fill opacity=0.3] (0-5,0) -- (3-5,0)-- (3-5,2) -- (0-5,2);
   \filldraw (0-5,0) circle (2pt) node [anchor=north] {$a$};
  \filldraw(3-5,0) circle (2pt) node [anchor=north] {$d$};
  \filldraw  (0-5,2) circle (2pt) node [anchor=south] {$c$};
  \filldraw (3-5,2) circle (2pt) node [anchor=south] {$b$};
  
   \draw[dashed] (-1-5,-0.5)-- (-1-5,1);
   \draw[dashed] (-1-5-1.5,1)-- (-1-5,1);
   \filldraw (-1-5,1) circle (2pt) node [anchor=west] {$e_k$};   
    \draw (-5+1.5,-1) circle (0pt) node [anchor=north] {\Large (1)};

 %%%%Figura 2%%%%
   \draw[thick] (0,0)--(3,0);
  \draw[thick] (0,0)--(0,2);
  \draw[thick] (3,2)--(3,0);
   \draw[thick] (3,2)--(0,2);
  
  \fill[fill=gray, fill opacity=0.3] (0,0) -- (3,0)-- (3,2) -- (0,2);
\filldraw (0,0) circle (1.5pt) node [anchor=north] {$a$};
  \filldraw(3,0) circle (2pt) node [anchor=north] {$d$};
\filldraw  (0,2) circle (1.5pt) node [anchor=south] {$c$};
  \filldraw (3,2) circle (2pt) node [anchor=south] {$b$};
  \draw (0,0) circle (3pt);
  \draw  (0,2) circle (3pt);
  \draw[dashed] (1.5,-0.5)-- (1.5,3);
   \filldraw (1.5,3) circle (2pt) node [anchor=west] {$e_k$};   
     \draw (1.5,-1) circle (0pt) node [anchor=north] {\Large (2)};

        %%%%Figura 3%%%%
  \draw[thick] (0+5,0)--(3+5,0);
  \draw[thick] (0+5,0)--(0+5,2);
  \draw[thick] (3+5,2)--(3+5,0);
   \draw[thick] (3+5,2)--(0+5,2);

  \fill[fill=gray, fill opacity=0.3] (0+5,0) -- (3+5,0)-- (3+5,2) -- (0+5,2);
\filldraw (0+5,0) circle (1.5pt) node [anchor=north] {$a$};
\filldraw (3+5,0) circle (1.5pt) node [anchor=north] {$d$};
  \filldraw (0+5,2) circle (2pt) node [anchor=south] {$c$};
  \filldraw (3+5,2) circle (2pt) node [anchor=south] {$b$};
\draw (0+5,0) circle (3pt);
\draw (3+5,0) circle (3pt);  
  
  \draw (1.5+5,-1) circle (0pt) node [anchor=north] {\Large (3A)};
 
  \draw[dashed] (-0.5+5,1)-- (4+5,1);
   \filldraw (4+5,1) circle (2pt) node [anchor=south] {$e_k$};
 
   %%%%Figura 4%%%%
    \draw[thick] (0+10,0)--(3+10,0);
  \draw[thick] (0+10,0)--(0+10,2);
  \draw[thick] (3+10,2)--(3+10,0);
   \draw[thick] (3+10,2)--(0+10,2);
  
  \fill[fill=gray, fill opacity=0.3] (0+10,0) -- (3+10,0)-- (3+10,2) -- (0+10,2);
  \filldraw (0+10,0) circle (1.5pt) node [anchor=north] {$a$};
\filldraw (3+10,0) circle (1.5pt) node [anchor=north] {$d$};
\filldraw (0+10,2) circle (1.5pt) node [anchor=south] {$c$};
\filldraw (3+10,2) circle (1.5pt) node [anchor=south] {$b$};
\draw (0+10,0) circle (3pt);
\draw (3+10,0) circle (3pt) ;
\draw (0+10,2) circle (3pt);
\draw (3+10,2) circle (3pt);
  \draw[dashed] (4+10,-0.5)-- (4+10,3);
  \draw[dashed] (4+10,-0.5)-- (4+10,3);
  \draw[dashed] (-0.5+10,3)-- (4+10,3);
   \filldraw (4+10,3) circle (2pt) node [anchor=south] {$e_k$};
    \draw (1.5+5+5,-1) circle (0pt) node [anchor=north] {\Large (3B)};
\end{tikzpicture}  
  }
\caption{Some positions of $e_k$ and induced flagging on $[a,b]$}\label{fig:e_k}

 \end{figure}
Case $(1)$. $w_k$ does not divide $\varphi(f)$ (see Figure \ref{fig:e_k} (1)). Case $(2)$.  $w_k$ divides either both $\varphi(x_a)$ and $\varphi(x_c)$ (see Figure \ref{fig:e_k} (2)) or it does not divide neither $\varphi(x_ax_b)$ nor $\varphi(x_cx_d)$. Case $(3)$. $w_k$ divides either $\varphi(x_a)$ and $\varphi(x_d)$ (see Figure \ref{fig:e_k} (3A)) or all $\varphi(x_a), \varphi(x_b), \varphi(x_c)$ and $\varphi(x_d)$ (see Figure \ref{fig:e_k} (3B)) or $w_k$ does not divide neither $\varphi(x_ax_b)$ nor $\varphi(x_cx_d)$. Therefore $n_k = p_k$, and it holds for any $k=1,\dots, r$.  It follows $\varphi(x_ax_b) = \varphi(x_cx_d)$, and $f \in \ker \varphi = J_{\MP}$. Since all generators of $I_{\MP}$ belong to $J_{\MP}$, the inclusion $I_{\MP} \subseteq (J_{\MP})_2$ is proved. 

We are going to prove the other inclusion, namely $(J_{\MP})_2 \subseteq I_{\MP}$. Let $f \in J_{\MP}$, $f=x_ax_b-x_cx_d$. We start observing that if $a=b$ or $a\in \{c,d\}$  we obtain that $f$ is null. Hence we assume without loss of generality  $a<b$ and $c<d$. Since $\varphi(x_ax_b)=\varphi(x_cx_d)$, by \eqref{Eq: varphi x_ax_b} and \eqref{Eq: varphi x_cx_d} the vertices $a$ and $d$ (resp. $b$ and $c$) lie on the same horizontal edge interval of $\MP$, and $a$ and $c$ (resp. $b$ and $d$) lie on the same vertical edge interval of $\MP$, and all the vertices of these edge intervals belong to $\MP$. Therefore, the vertices $a,b,c$, and $d$ are the corners of the interval $[a,b]$. By contradiction, we assume that $[a,b]$ is not an inner interval of $\MP$, namely exists a set of cells $\MC$ that does not belong to $\MP$ such that $[a,b] \cap \MC \neq \emptyset$. 
We observe that the set  $[a,b] \cap \MC$ is a set of holes of $\MP$ properly contained in $[a,b]$ because $[a,d]$, $[a,c]$, $[b,c]$ and  $[b,d]$ are edge intervals in $\MP$.  Let $\MH_1$ be a hole in $[a,b] \cap \MC$ with lower left corner $e=(i,j)$. Let $\MF_1=\{(m,n) \in V(\MP) \ | \ m \leq i \text{ and } n \leq j\}$, then $a$ is the unique vertex in $\{a,b,c,d\}$ such that $a \in \MF_1$, namely $w_1 |\varphi(x_ax_b)$ but $w_1 \nmid \varphi(x_cx_d)$, and $f \notin J_{\MP}$. The assertion follows.
\end{proof}

% \begin{definition}
% Let $P$ be a polyomino. A sequence of distinct inner intervals $\W: I_1, \dots, I_\ell$ of $P$ is a closed walk of $P$, if $|I_i \cap I_{i+1}|=1$ and $|I_1 \cap I_\ell| = 1$, for $i =1, \dots, \ell-1$.
% \end{definition}
% 
% \begin{definition}\label{Def: zig-zag walk}
% Let $\W: I_1, \dots, I_\ell$ be a closed walk of $P$. Let $I_i \cap I_{i+1} = \{v_{i+1}\}$, for $i=1, \dots, \ell-1$, and $v_1= v_{\ell+1} = I_1 \cap I_{\ell}$. If $v_i, v_{i+1}$ are neither diagonal nor anti-diagonal corners of $I_i$, for all $i=1, \dots, \ell$,  then $\mathcal{W}$ is called a zig-zag walk of $P$, and the vertex $v_1$ is called the starting point of $\W$.
% \end{definition}

Describing completely the elements of $J_\MP \setminus I_\MP$ is not an easy task. However, if the polyomino contains a particular collection of inner intervals, then we have some partial information on the elements of $J_\MP \setminus I_\MP$. The latter gives also a sufficient condition for the non-primality of $I_\MP$, hence a necessary condition for the primality.  In the rest of the section, we give such a condition.

\begin{definition}\label{Def:zigzag}
Let $\MP$ be a polyomino. A sequence of distinct inner intervals $\W: I_1, \dots, I_\ell$ of $\MP$ such that $v_i$, $z_i$ are diagonal (resp. anti-diagonal) corners and $u_i$, $v_{i+1}$ the anti-diagonal (resp. diagonal) corners of $I_i$, for $i=1, \dots, \ell$,  is a {\em zig-zag walk} of $\MP$, if 
\begin{enumerate}
 \item[(Z1)] $I_1 \cap I_\ell = \{v_1=v_{\ell+1}\}$ and $I_i \cap I_{i+1}=\{v_{i+1}\}$, for $i =1, \dots, \ell-1$,
 \item[(Z2)] $v_i$ and $v_{i+1}$ are on a same edge interval of $\MP$, for $i=1, \dots,\ell$,
 \item[(Z3)] for any $i,j\in\{1,\ldots,\ell\}$, with $i \neq j$, does not exist an inner interval $J$ of $\MP$ such that $z_i,z_j\in J$.
\end{enumerate}
%The vertex $v_1$ is called the starting point of $\W$.
\end{definition}

\begin{remark}\label{Rem: ell odd or even} 
Let $\W:I_1 \dots, I_\ell$ be a zig-zag walk of $\MP$. Then 
\begin{enumerate}
 \item if $v_i$ is a diagonal vertex of $I_i$, then $v_{i+1}$ is an anti-diagonal vertex of $I_{i+1}$;
 \item $\ell$ is even.
\end{enumerate}
\end{remark}
\begin{proof}
(1) Assume that $v_k$, with $k \in \{1,\ldots, \ell-1\}$ is a diagonal corner of $I_k$. From condition (Z2), $v_{k+1}$ lies on the same edge interval of $v_k$, say $E$, and is an anti-diagonal corner of $I_k$. The line containing $E$ divides $\mathbb{N}^2$ in two semi-planes. From condition (Z1), we have $I_k \cap I_{k+1} =\{v_{k+1}\}$, hence $I_k$ and $I_{k+1}$ do not lie on the same semi-plane. Therefore, $v_{k+1}$ is anti-diagonal corner of $I_{k+1}$, as well. Observe that the latter justifies the name ``zig-zag''. \\
(2) Assume that the starting point $v_1$ is a diagonal corner of $I_1$.  From (1) it follows that the vertex $v_k$ is a diagonal corner of $I_k$ if and only if $k$ is even (resp. anti-diagonal corner if and only if $k$ is odd). Since $v_{\ell+1}=v_1$, $\ell+1$ is odd.
\end{proof}

\begin{remark}\label{Rem: supp f in I_P}
Let $\MP$ be a polyomino and $I_{\MP}\subset S$ the polyomino ideal associated to $\MP$. If $f \in I_{\MP}$, then 
\[
f=\sum f_{I_i}f_i =\sum x_{a_i}x_{b_i}f_i-\sum x_{c_i}x_{d_i}f_i,
\]
where $f_{I_i}=x_{a_i}x_{b_i}-x_{c_i}x_{d_i}\in \MM$, hence for every $m$, monomial of $f$, there are two variables in $m$ that are (anti-)diagonal corners of an inner interval of $\MP$. 
\end{remark}

The following proposition gives a necessary condition on $\MP$ to be a non-prime polyomino ideal $I_{\MP}$. 

\begin{proposition}\label{Prop: if exists zig-zag then not prime}
Let $\MP$ be a polyomino and $I_{\MP}$ the polyomino ideal associated to $\MP$. If there exists a zig-zag walk  $\W: I_1, \dots, I_{\ell}$  in $\MP$ then
\[
x_{v_1},\ldots,x_{v_\ell}\mbox{ and }f_\W = \prod_{k=1,\dots,\ell} x_{z_k} - \prod_{j=1,\dots,\ell} x_{u_j}
\] 
are zerodivisors of $K[\MP]$ with $x_{v_i}f_\W\in I_\MP$ for $i=1,\ldots,\ell$.
\end{proposition}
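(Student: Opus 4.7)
My plan is to establish $x_{v_i} f_\W \in I_\MP$ for every $i$ via a cyclic telescoping of the inner $2$-minors furnished by the zig-zag walk, then to show separately that $f_\W \notin I_\MP$ by invoking Remark~\ref{Rem: supp f in I_P} together with condition (Z3). Since $I_\MP$ is generated in degree $2$ and hence contains no variable, combining these facts will give that both $f_\W$ and each $x_{v_i}$ are zerodivisors in $K[\MP]$.

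For the telescoping, I would first observe that by Definition~\ref{Def:zigzag} the four corners of each $I_k$ are $v_k,u_k,z_k,v_{k+1}$, with $\{v_k,z_k\}$ and $\{u_k,v_{k+1}\}$ the two pairs of opposite corners. Hence, regardless of orientation, $\pm(x_{v_k}x_{z_k}-x_{u_k}x_{v_{k+1}})$ is the inner $2$-minor of $I_k$, so modulo $I_\MP$ one has $x_{v_k}x_{z_k}\equiv x_{u_k}x_{v_{k+1}}$ for every $k$. Fixing $i$ and starting from $x_{v_i}\prod_{k=1}^\ell x_{z_k}$, I would apply these congruences cyclically for $k=i,i+1,\dots,i+\ell-1$ (indices mod $\ell$), each step swapping one $x_{v_k}x_{z_k}$ for $x_{u_k}x_{v_{k+1}}$. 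After $\ell$ steps the chain closes thanks to $v_{\ell+1}=v_1$ from (Z1), and one reaches $x_{v_i}\prod_{k=1}^\ell x_{u_k}$; this yields $x_{v_i}f_\W\in I_\MP$.

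To prove $f_\W\notin I_\MP$, I would argue by contradiction, focusing on the monomial $m=\prod_{k=1}^\ell x_{z_k}$ of $f_\W$. First note that the $z_k$ must be pairwise distinct: if $z_i=z_j$ for some $i\neq j$, then $J=I_i$ would satisfy $z_i,z_j\in J$, violating (Z3). Now, if $f_\W$ were in $I_\MP$, Remark~\ref{Rem: supp f in I_P} would force two of the distinct variables of $m$ to be (anti-)diagonal corners of some inner interval $J$ of $\MP$; these would be $x_{z_i},x_{z_j}$ for some $i\neq j$, whence $z_i,z_j\in J$, again contradicting (Z3). Hence the class of $f_\W$ in $K[\MP]$ is nonzero; since each $x_{v_i}$ is likewise nonzero in $K[\MP]$, both $x_{v_i}$ and $f_\W$ are zerodivisors.

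The step I expect to demand the most care is the cyclic telescoping: one must verify that the $\ell$ substitutions use distinct indices, that no $x_{z_k}$ factor is lost or duplicated, and that the chain closes precisely at $v_i$ after $\ell$ steps. This rests on the distinctness of the intervals $I_k$ in the walk and on the cyclic closure $v_{\ell+1}=v_1$ given by (Z1).
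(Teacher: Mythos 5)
Your proof is correct and takes essentially the same route as the paper's: the cyclic telescoping you describe is precisely the paper's alternating sum $\tilde{f}=\sum_{i}(-1)^{i+1}\bigl(\prod_{j<i}x_{u_j}\bigr)\bigl(\prod_{k>i}x_{z_k}\bigr)f_{I_i}$ read as a chain of congruences modulo $I_\MP$, and the non-membership $f_\W\notin I_\MP$ is obtained identically via Remark~\ref{Rem: supp f in I_P} together with condition (Z3). Your extra check that the $z_k$ are pairwise distinct is a sound precaution that the paper leaves implicit.
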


\begin{proof}
For any vertex $v_j$ in $v_1,\ldots,v_\ell$, after relabelling, we may assume $j=1$.  Let $f_{I_i} \in \MM$ be associated to the inner interval $I_i$.

We define the following polynomial
\[
\tilde{f}= \prod_{k>1} x_{z_k} f_{I_1} + \dots + (-1)^{i+1}
\prod_{ j<i} x_{u_j} \prod_{k>i} x_{z_k} f_{I_i} +\dots + (-1)^{\ell+1} \prod_{j<\ell} x_{u_j} f_{I_\ell}
\]
Let $i=1, \dots, \ell-1$. Suppose that $v_{i}$ is a diagonal corner of $I_i$, hence $v_{i+1}$ is an anti-diagonal corner of $I_{i+1}$. It holds
\begin{eqnarray*}
\prod_{ j<i} x_{u_j} \prod_{k>i} x_{z_k} f_{I_i}&-&\prod_{ j<i+1} x_{u_j} \prod_{k>i+1} x_{z_k} f_{I_{i+1}}\\
= \prod_{j<i} x_{u_j} \prod_{ k>i} x_{z_k} (x_{v_i}x_{z_i}-x_{v_{i+1}}x_{u_i})&-&\prod_{j<i+1} x_{u_j} \prod_{k>i+1} x_{z_k} (x_{v_{i+2}}x_{u_{i+1}}-x_{v_{i+1}}x_{z_{i+1}})\\
= \prod_{ j<i} x_{u_j} \prod_{ k \geq i} x_{z_k} x_{v_i}&-&\prod_{ j \leq i+1} x_{u_j} \prod_{ k>i+1} x_{v_{i+2}}.
\end{eqnarray*}
Due to the alternation of the signs in $\tilde{f}$ and by Remark \ref{Rem: ell odd or even}, it follows that
\[
\tilde{f} = \pm \left( \prod_{k=1,\dots,\ell} x_{z_k}x_{v_1} - \prod_{j=1,\dots,\ell} x_{u_j} x_{v_1} \right) = \pm x_{v_1}f_\W,
\]
%where
%\[
%f = \prod_{k=1,\dots,\ell} x_{z_k} - \prod_{j=1,\dots,\ell} x_{u_j},
%\]
and the sign of $\tilde{f}$ depends if $v_1$ is a diagonal corner in $I_1$.

Since $\tilde{f}$ is sum of polynomials in $I_{\MP}$, then $\tilde{f} \in I_{\MP}$. Observe that, by hypothesis, for $i \neq j$, $z_i, z_j$ do not belong to the same inner interval of $\MP$, and the same fact holds for $u_i$ and $u_j$,  with $i\neq j$.  Due to this fact and by Remark \ref{Rem: supp f in I_P}, $f \not \in I_{\MP}$. Therefore, $x_{v_1}$ and $f_\W$ are  zerodivisors  of $K[\MP]$. 
\end{proof}
\begin{corollary}\label{Cor: if zig-zag then non-prime}
Let $\MP$ be a polyomino and $I_{\MP}$ the polyomino ideal associated to $\MP$. If there exists a zig-zag walk  in $\MP$, then $I_\MP$ is not prime.
\end{corollary}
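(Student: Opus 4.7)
The plan is to observe that this corollary is essentially an immediate translation of Proposition~\ref{Prop: if exists zig-zag then not prime} into the language of prime ideals, so no substantial new work is required. Recall the standard fact that an ideal $I_\MP \subset S$ is prime if and only if the quotient $K[\MP]=S/I_\MP$ is an integral domain; equivalently, $I_\MP$ fails to be prime as soon as we can exhibit two elements $g, h \in S \setminus I_\MP$ whose product $gh$ lies in $I_\MP$.

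The plan is to take $g = x_{v_1}$ and $h = f_\W$, where $\W: I_1,\dots,I_\ell$ is the zig-zag walk provided by hypothesis, and $f_\W = \prod_k x_{z_k} - \prod_j x_{u_j}$ as in the statement of Proposition~\ref{Prop: if exists zig-zag then not prime}. From that proposition we already have $x_{v_1} f_\W \in I_\MP$, so the only remaining point is to argue that neither $x_{v_1}$ nor $f_\W$ belongs to $I_\MP$. For $x_{v_1}$ this is clear because $I_\MP$ is generated by binomials of degree $2$ (so no degree-$1$ monomial can lie in $I_\MP$), and for $f_\W$ this is exactly what was verified at the end of the proof of Proposition~\ref{Prop: if exists zig-zag then not prime} by appealing to Remark~\ref{Rem: supp f in I_P}: the monomials $\prod_k x_{z_k}$ and $\prod_j x_{u_j}$ of $f_\W$ contain no pair of variables corresponding to (anti-)diagonal corners of a common inner interval of $\MP$, thanks to condition~(Z3) of Definition~\ref{Def:zigzag}, so $f_\W$ cannot be written as an $S$-linear combination of inner $2$-minors.

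Combining these two facts with the relation $x_{v_1} f_\W \in I_\MP$, one concludes that $I_\MP$ is not prime. There is no real obstacle here; the entire content of the corollary has been produced by Proposition~\ref{Prop: if exists zig-zag then not prime}, and the corollary merely records its ring-theoretic consequence.
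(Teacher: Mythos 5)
Your proposal is correct and matches the paper's approach exactly: the paper gives no separate proof of this corollary, treating it as an immediate consequence of Proposition~\ref{Prop: if exists zig-zag then not prime}, which already supplies the nonzero zerodivisors $x_{v_1}$ and $f_\W$ with $x_{v_1}f_\W \in I_\MP$. Your spelled-out justification (that $x_{v_1}\notin I_\MP$ because $I_\MP$ is homogeneous and generated in degree $2$, and that $f_\W\notin I_\MP$ by Remark~\ref{Rem: supp f in I_P} together with condition (Z3)) is precisely what the proposition's proof establishes.
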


\begin{remark}
The ideal $J_\MP$ contains the binomials associated to zig-zag walks. Indeed, let $\MW$ be a zig-zag walk and let $f_\MW$ be its associated binomial. From the proof of Proposition \ref{Prop: if exists zig-zag then not prime}, it arises that 
\[
x_{v_1} f_\MW \in I_\MP \subseteq J_\MP
\]
and,  due to primality of $J_\MP$, it follows $f_\MW \in J_\MP$.
\end{remark}

We give an example to better understand the structure of $J_\MP$.
\begin{example}
We consider the polyomino in Figure \ref{Fig: 2Ears}.
 \begin{figure}[h!]
  \centering
    \resizebox{0.4\textwidth}{!}{
  \begin{tikzpicture}

\draw[thick] (1,0) --  (6,0);
\draw[thick] (0,1) --  (7,1);
\draw[thick] (0,2) --  (7,2);
\draw[thick] (1,3) --  (6,3);

\draw[thick] (0,1) --  (0,2);
\draw[thick] (1,0) --  (1,3);
\draw[thick] (2,0) --  (2,3);
\draw[thick] (3,0) --  (3,3);
\draw[thick] (4,0) --  (4,3);
\draw[thick] (5,0) --  (5,3);
\draw[thick] (6,0) --  (6,3);
\draw[thick] (7,1) --  (7,2);

\fill[fill=gray, fill opacity=0.3] (1,0) -- (6,0)-- (6,1) -- (1,1);
\fill[fill=gray, fill opacity=0.3] (1,2) -- (6,2)-- (6,3) -- (1,3);
\fill[fill=gray, fill opacity=0.3] (0,1) -- (2,1)-- (2,2) -- (0,2);
\fill[fill=gray, fill opacity=0.3] (3,1) -- (4,1)-- (4,2) -- (3,2);
\fill[fill=gray, fill opacity=0.3] (5,1) -- (7,1)-- (7,2) -- (5,2);

   \end{tikzpicture}}
      \caption{}\label{Fig: 2Ears}

   \end{figure}   
By using \texttt{Macaulay2}, we computed the ideal $J_\MP$ associated to $\MP$. $J_\MP$ has $50$ generators, $46$ having degree $2$, corresponding to the inner $2$-minors of $\MP$, and $4$ having degree $4$ that do not belong to $I_\MP$. The latter are:
\[
f_1=x_{(1,3)}x_{(3,1)}x_{(7,4)}x_{(8,2)} -x_{(1,2)}x_{(3,4)}x_{(7,1)}x_{(8,3)},
\]
\[ 
f_2=x_{(1,3)}x_{(2,1)}x_{(7,4)}x_{(8,2)} - x_{(1,2)}x_{(2,4)}x_{(7,1)}x_{(8,3)},
\]
\[
 f_3=x_{(1,3)}x_{(3,1)}x_{(6,4)}x_{(8,2)} - x_{(1,2)}x_{(3,4)}x_{(6,1)}x_{(8,3)},
 \]
 \[
 f_4= x_{(13)}x_{(2,1)}x_{(6,4)}x_{(8,2)} - x_{(1,2)}x_{(2,4)}x_{(6,1)}x_{(8,3)}.
\]
The four binomials above correspond to the four zig-zag walks drawn in Figure \ref{Fig: 4 zigzag}.
 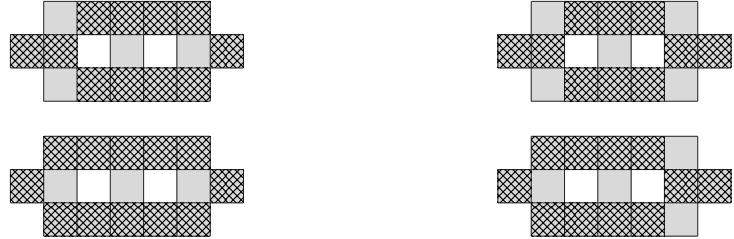
\begin{figure}[h!]
 \centering
 \begin{subfigure}[h]{0.5\textwidth}
\centering
     \resizebox{0.5\textwidth}{!}{
  \begin{tikzpicture}

\draw[thick] (1,0) --  (6,0);
\draw[thick] (0,1) --  (7,1);
\draw[thick] (0,2) --  (7,2);
\draw[thick] (1,3) --  (6,3);

\draw[thick] (0,1) --  (0,2);
\draw[thick] (1,0) --  (1,3);
\draw[thick] (2,0) --  (2,3);
\draw[thick] (3,0) --  (3,3);
\draw[thick] (4,0) --  (4,3);
\draw[thick] (5,0) --  (5,3);
\draw[thick] (6,0) --  (6,3);
\draw[thick] (7,1) --  (7,2);

\fill[fill=gray, fill opacity=0.3] (1,0) -- (6,0)-- (6,1) -- (1,1);
\fill[fill=gray, fill opacity=0.3] (1,2) -- (6,2)-- (6,3) -- (1,3);
\fill[fill=gray, fill opacity=0.3] (0,1) -- (2,1)-- (2,2) -- (0,2);
\fill[fill=gray, fill opacity=0.3] (3,1) -- (4,1)-- (4,2) -- (3,2);
\fill[fill=gray, fill opacity=0.3] (5,1) -- (7,1)-- (7,2) -- (5,2);

\fill[pattern=crosshatch ](2,0) -- (6,0)-- (6,1) -- (2,1);
\fill[pattern=crosshatch ] (0,1) -- (2,1)-- (2,2) -- (0,2);
\fill[pattern=crosshatch ] (6,1) -- (7,1)-- (7,2) -- (6,2);
\fill[pattern=crosshatch ] (2,2) -- (6,2)-- (6,3) -- (2,3);

   \end{tikzpicture}}
 \hfill\\ 
 \hfill\\
    \resizebox{0.5\textwidth}{!}{
  \begin{tikzpicture}

\draw[thick] (1,0) --  (6,0);
\draw[thick] (0,1) --  (7,1);
\draw[thick] (0,2) --  (7,2);
\draw[thick] (1,3) --  (6,3);

\draw[thick] (0,1) --  (0,2);
\draw[thick] (1,0) --  (1,3);
\draw[thick] (2,0) --  (2,3);
\draw[thick] (3,0) --  (3,3);
\draw[thick] (4,0) --  (4,3);
\draw[thick] (5,0) --  (5,3);
\draw[thick] (6,0) --  (6,3);
\draw[thick] (7,1) --  (7,2);

\fill[fill=gray, fill opacity=0.3] (1,0) -- (6,0)-- (6,1) -- (1,1);
\fill[fill=gray, fill opacity=0.3] (1,2) -- (6,2)-- (6,3) -- (1,3);
\fill[fill=gray, fill opacity=0.3] (0,1) -- (2,1)-- (2,2) -- (0,2);
\fill[fill=gray, fill opacity=0.3] (3,1) -- (4,1)-- (4,2) -- (3,2);
\fill[fill=gray, fill opacity=0.3] (5,1) -- (7,1)-- (7,2) -- (5,2);

\fill[pattern=crosshatch ](1,0) -- (6,0)-- (6,1) -- (1,1);
\fill[pattern=crosshatch ](0,1) -- (1,1)-- (1,2) -- (0,2);
\fill[pattern=crosshatch ] (6,1) -- (7,1)-- (7,2) -- (6,2);
\fill[pattern=crosshatch ](1,2) -- (6,2)-- (6,3) -- (1,3);
   \end{tikzpicture}}
\end{subfigure}%
~
 \begin{subfigure}[h]{0.5\textwidth}
 \centering
    \resizebox{0.5\textwidth}{!}{
  \begin{tikzpicture}

\draw[thick] (1,0) --  (6,0);
\draw[thick] (0,1) --  (7,1);
\draw[thick] (0,2) --  (7,2);
\draw[thick] (1,3) --  (6,3);

\draw[thick] (0,1) --  (0,2);
\draw[thick] (1,0) --  (1,3);
\draw[thick] (2,0) --  (2,3);
\draw[thick] (3,0) --  (3,3);
\draw[thick] (4,0) --  (4,3);
\draw[thick] (5,0) --  (5,3);
\draw[thick] (6,0) --  (6,3);
\draw[thick] (7,1) --  (7,2);

\fill[fill=gray, fill opacity=0.3] (1,0) -- (6,0)-- (6,1) -- (1,1);
\fill[fill=gray, fill opacity=0.3] (1,2) -- (6,2)-- (6,3) -- (1,3);
\fill[fill=gray, fill opacity=0.3] (0,1) -- (2,1)-- (2,2) -- (0,2);
\fill[fill=gray, fill opacity=0.3] (3,1) -- (4,1)-- (4,2) -- (3,2);
\fill[fill=gray, fill opacity=0.3] (5,1) -- (7,1)-- (7,2) -- (5,2);

\fill[pattern=crosshatch ] (2,0) -- (5,0)-- (5,1) -- (2,1);
\fill[pattern=crosshatch ] (0,1) -- (2,1)-- (2,2) -- (0,2);
\fill[pattern=crosshatch ] (5,1) -- (7,1)-- (7,2) -- (5,2);
\fill[pattern=crosshatch ](2,2) -- (5,2)-- (5,3) -- (2,3);
   \end{tikzpicture}}
   \hfill\\ 
   \hfill\\
    \resizebox{0.5\textwidth}{!}{
  \begin{tikzpicture}

\draw[thick] (1,0) --  (6,0);
\draw[thick] (0,1) --  (7,1);
\draw[thick] (0,2) --  (7,2);
\draw[thick] (1,3) --  (6,3);

\draw[thick] (0,1) --  (0,2);
\draw[thick] (1,0) --  (1,3);
\draw[thick] (2,0) --  (2,3);
\draw[thick] (3,0) --  (3,3);
\draw[thick] (4,0) --  (4,3);
\draw[thick] (5,0) --  (5,3);
\draw[thick] (6,0) --  (6,3);
\draw[thick] (7,1) --  (7,2);

\fill[fill=gray, fill opacity=0.3] (1,0) -- (6,0)-- (6,1) -- (1,1);
\fill[fill=gray, fill opacity=0.3] (1,2) -- (6,2)-- (6,3) -- (1,3);
\fill[fill=gray, fill opacity=0.3] (0,1) -- (2,1)-- (2,2) -- (0,2);
\fill[fill=gray, fill opacity=0.3] (3,1) -- (4,1)-- (4,2) -- (3,2);
\fill[fill=gray, fill opacity=0.3] (5,1) -- (7,1)-- (7,2) -- (5,2);

\fill[pattern=crosshatch ] (1,0) -- (5,0)-- (5,1) -- (1,1);
\fill[pattern=crosshatch ] (0,1) -- (1,1)-- (1,2) -- (0,2);
\fill[pattern=crosshatch ](5,1) -- (7,1)-- (7,2) -- (5,2);
\fill[pattern=crosshatch ] (1,2) -- (5,2)-- (5,3) -- (1,3);
   \end{tikzpicture}}
\end{subfigure}
\caption{The zig-zag walks related to $f_1,\ldots, f_4$.}\label{Fig: 4 zigzag}
   \end{figure}   
    \begin{figure}[h!]
  \centering
    \resizebox{0.4\textwidth}{!}{
  \begin{tikzpicture}

\draw[thick] (1,0) --  (6,0);
\draw[thick] (0,1) --  (7,1);
\draw[thick] (0,2) --  (7,2);
\draw[thick] (1,3) --  (6,3);
\draw[thick] (2,-1) --  (3,-1);
\draw[thick] (4,-1) --  (5,-1);
\draw[thick] (2,4) --  (3,4);
\draw[thick] (4,4) --  (5,4);

\draw[thick] (0,1) --  (0,2);
\draw[thick] (1,0) --  (1,3);
\draw[thick] (2,-1) --  (2,4);
\draw[thick] (3,-1) --  (3,4);
\draw[thick] (4,-1) --  (4,4);
\draw[thick] (5,-1) --  (5,4);
\draw[thick] (6,0) --  (6,3);
\draw[thick] (7,1) --  (7,2);

\fill[fill=gray, fill opacity=0.3] (1,0) -- (6,0)-- (6,1) -- (1,1);
\fill[fill=gray, fill opacity=0.3] (1,2) -- (6,2)-- (6,3) -- (1,3);
\fill[fill=gray, fill opacity=0.3] (0,1) -- (2,1)-- (2,2) -- (0,2);
\fill[fill=gray, fill opacity=0.3] (3,1) -- (4,1)-- (4,2) -- (3,2);
\fill[fill=gray, fill opacity=0.3] (5,1) -- (7,1)-- (7,2) -- (5,2);

\fill[fill=gray, fill opacity=0.3] (2,-1) -- (3,-1)-- (3,0) -- (2,0);
\fill[fill=gray, fill opacity=0.3] (4,-1) -- (5,-1)-- (5,0) -- (4,0);
\fill[fill=gray, fill opacity=0.3] (2,3) -- (3,3)-- (3,4) -- (2,4);
\fill[fill=gray, fill opacity=0.3] (4,3) -- (5,3)-- (5,4) -- (4,4);
   \end{tikzpicture}}
      \caption{}\label{Fig: 2PF}

   \end{figure}  
In this case, the generators of $J_\MP$ in $J_\MP\setminus I_\MP$ are all related to zig-zag walks. However, we computed $J_\MP$ for the polyomino in Figure \ref{Fig: 2PF}, and we found that there are generators of degree $6$ that are not related to zig-zag walks, for example
\[
g= x_{(1,4)}x_{(3,1)}x_{(4,6)}x_{(5,1)}x_{(6,6)}x_{(8,3)} - x_{(1,3)}x_{(3,6)}x_{(4,1)}x_{(5,6)}x_{(6,1)}x_{(8,4)}.
\]
In Figure \ref{Fig: NoZig} (A), we highlight the intervals related to $g$. On the other hand, there are two zig-zag walks that arises from $g$, as in Figure \ref{Fig: NoZig} (B).

 \begin{figure}[h!]
  \centering
  \begin{subfigure}[h]{0.5 \textwidth}
  \centering
    \resizebox{0.7\textwidth}{!}{
  \begin{tikzpicture}

\draw[thick] (1,0) --  (6,0);
\draw[thick] (0,1) --  (7,1);
\draw[thick] (0,2) --  (7,2);
\draw[thick] (1,3) --  (6,3);
\draw[thick] (2,-1) --  (3,-1);
\draw[thick] (4,-1) --  (5,-1);
\draw[thick] (2,4) --  (3,4);
\draw[thick] (4,4) --  (5,4);

\draw[thick] (0,1) --  (0,2);
\draw[thick] (1,0) --  (1,3);
\draw[thick] (2,-1) --  (2,4);
\draw[thick] (3,-1) --  (3,4);
\draw[thick] (4,-1) --  (4,4);
\draw[thick] (5,-1) --  (5,4);
\draw[thick] (6,0) --  (6,3);
\draw[thick] (7,1) --  (7,2);

\fill[fill=gray, fill opacity=0.3] (1,0) -- (6,0)-- (6,1) -- (1,1);
\fill[fill=gray, fill opacity=0.3] (1,2) -- (6,2)-- (6,3) -- (1,3);
\fill[fill=gray, fill opacity=0.3] (0,1) -- (2,1)-- (2,2) -- (0,2);
\fill[fill=gray, fill opacity=0.3] (3,1) -- (4,1)-- (4,2) -- (3,2);
\fill[fill=gray, fill opacity=0.3] (5,1) -- (7,1)-- (7,2) -- (5,2);

\fill[fill=gray, fill opacity=0.3] (2,-1) -- (3,-1)-- (3,0) -- (2,0);
\fill[fill=gray, fill opacity=0.3] (4,-1) -- (5,-1)-- (5,0) -- (4,0);
\fill[fill=gray, fill opacity=0.3] (2,3) -- (3,3)-- (3,4) -- (2,4);
\fill[fill=gray, fill opacity=0.3] (4,3) -- (5,3)-- (5,4) -- (4,4);

\fill[pattern=crosshatch ] (0,1) -- (2,1)-- (2,2) -- (0,2);
\fill[pattern=crosshatch ] (2,2) -- (3,2)-- (3,4) -- (2,4);
\fill[pattern=crosshatch ] (2,-1) -- (3,-1)-- (3,1) -- (2,1);

\fill[pattern=crosshatch ] (5,1) -- (7,1)-- (7,2) -- (5,2);
\fill[pattern=crosshatch ] (4,2) -- (5,2)-- (5,4) -- (4,4);
\fill[pattern=crosshatch ](4,-1) -- (5,-1)-- (5,1) -- (4,1);

   \end{tikzpicture}}
     \caption{$g$ is not related to a zig-zag walk...}
   \end{subfigure}%
 \begin{subfigure}[h]{0.5 \textwidth}\
\centering
    \resizebox{0.7\textwidth}{!}{
  \begin{tikzpicture}

\draw[thick] (1,0) --  (6,0);
\draw[thick] (0,1) --  (7,1);
\draw[thick] (0,2) --  (7,2);
\draw[thick] (1,3) --  (6,3);
\draw[thick] (2,-1) --  (3,-1);
\draw[thick] (4,-1) --  (5,-1);
\draw[thick] (2,4) --  (3,4);
\draw[thick] (4,4) --  (5,4);

\draw[thick] (0,1) --  (0,2);
\draw[thick] (1,0) --  (1,3);
\draw[thick] (2,-1) --  (2,4);
\draw[thick] (3,-1) --  (3,4);
\draw[thick] (4,-1) --  (4,4);
\draw[thick] (5,-1) --  (5,4);
\draw[thick] (6,0) --  (6,3);
\draw[thick] (7,1) --  (7,2);

\fill[fill=gray, fill opacity=0.3] (1,0) -- (6,0)-- (6,1) -- (1,1);
\fill[fill=gray, fill opacity=0.3] (1,2) -- (6,2)-- (6,3) -- (1,3);
\fill[fill=gray, fill opacity=0.3] (0,1) -- (2,1)-- (2,2) -- (0,2);
\fill[fill=gray, fill opacity=0.3] (3,1) -- (4,1)-- (4,2) -- (3,2);
\fill[fill=gray, fill opacity=0.3] (5,1) -- (7,1)-- (7,2) -- (5,2);

\fill[fill=gray, fill opacity=0.3] (2,-1) -- (3,-1)-- (3,0) -- (2,0);
\fill[fill=gray, fill opacity=0.3] (4,-1) -- (5,-1)-- (5,0) -- (4,0);
\fill[fill=gray, fill opacity=0.3] (2,3) -- (3,3)-- (3,4) -- (2,4);
\fill[fill=gray, fill opacity=0.3] (4,3) -- (5,3)-- (5,4) -- (4,4);

\fill[pattern=north east lines] (0,1) -- (2,1)-- (2,2) -- (0,2);
\fill[pattern=north east lines] (2,2) -- (3,2)-- (3,4) -- (2,4);
\fill[pattern=north east lines](2,-1) -- (3,-1)-- (3,1) -- (2,1);
\fill[pattern=north east lines] (3,1) -- (4,1)-- (4,2) -- (3,2);

\fill[pattern=north west lines] (5,1) -- (7,1)-- (7,2) -- (5,2);
\fill[pattern=north west lines](4,2) -- (5,2)-- (5,4) -- (4,4);
\fill[pattern=north west lines](4,-1) -- (5,-1)-- (5,1) -- (4,1);
\fill[pattern=north west lines] (3,1) -- (4,1)-- (4,2) -- (3,2);
   \end{tikzpicture}}
   \caption{...but there are two zig-zag walks}
   \end{subfigure}   
   \caption{}\label{Fig: NoZig}
   \end{figure}

\end{example}

To verify that the non-existence of zig-zag walk is a sufficient condition for the primality of $I_\MP$, for any multiply connected polyomino $\MP$ of rank $\leq 14$, is not an easy task.  In fact, the set of polyominoes grows exponentially with respect to the rank as the following table, obtained by the implementation  in \cite{MRR}, shows. 
\begin{table}[h]
\centering
\begin{tabular}{lrrrrrrrr}
Rank  & 7 & 8 & 9 & 10 & 11 & 12 & 13 & 14 \\
Multiply connected polyominoes & 1 & 6 & 37 & 195 & 979 & 4663 & 21474 & 96496 
\end{tabular}
\end{table}

\begin{theorem}\label{Theo: rank <= 14}
 Let $\MP$ be a polyomino with $\rank(\MP)$ $\leq 14$. The following conditions are equivalent:
 \begin{enumerate}    
\item the polyomino ideal $I_\MP$ is prime;
\item $\MP$ contains no zig-zag walks.
 \end{enumerate}
\end{theorem}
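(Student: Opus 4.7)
The plan is to prove the two implications separately, with only the nontrivial direction requiring computation.

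For (1) $\Rightarrow$ (2), I would simply invoke Corollary \ref{Cor: if zig-zag then non-prime}: if $\MP$ contained a zig-zag walk then $I_\MP$ would fail to be prime, contradicting the hypothesis. Note that this implication holds for every polyomino, with no restriction on the rank.

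For (2) $\Rightarrow$ (1), I would split into two cases. If $\MP$ is simple, the conclusion follows from the known theorem of \cite{HM, HQS, QSS} asserting that the polyomino ideal of a simple polyomino is prime. The substantive case is that of multiply connected polyominoes. Here the strategy is exhaustive verification: by \cite{MRR}, there is an algorithm that enumerates, up to the natural equivalence of polyominoes, all multiply connected polyominoes of a prescribed rank, and the counts given in the table preceding the theorem confirm that for rank $\leq 14$ the total is finite (summing to $123851$). For each such $\MP$, I would first run a combinatorial search for zig-zag walks by iterating over ordered sequences of inner intervals $I_1,\dots,I_\ell$ of $\MP$ and checking conditions (Z1), (Z2), (Z3) of Definition \ref{Def:zigzag}. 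The polyominoes in which such a walk is found are discarded, being already covered by Corollary \ref{Cor: if zig-zag then non-prime}. For each polyomino in which no zig-zag walk exists, I would verify primality of $I_\MP$ directly using \texttt{Macaulay2} \cite{M2}, for instance by computing the minimal primary decomposition of $I_\MP$ and confirming it consists of a single prime, or equivalently by checking that $S/I_\MP$ is a domain of the expected dimension.

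The main obstacle is computational rather than conceptual. On one hand, the number of polyominoes to process grows nearly geometrically with the rank (roughly quintupling from rank to rank in the given table), so care in managing the enumeration up to symmetry (translation, rotation, reflection) is essential to avoid redundant work. On the other hand, the \texttt{Macaulay2} primality test is the costly step: for the larger polyominoes at rank $14$, $I_\MP$ lives in a polynomial ring with many variables and has many generators, and minimal-prime computations can become prohibitive without careful strategies (for example, first reducing modulo obvious linear syzygies, exploiting symmetry, or combining the zig-zag filter with faster domain-tests on $S/I_\MP$). A subsidiary point is correctness of the zig-zag detection: one must enumerate inner intervals with the alternating diagonal/anti-diagonal corner pattern dictated by Remark \ref{Rem: ell odd or even}, and guarantee condition (Z3), which forbids any $z_i, z_j$ from lying in a common inner interval.
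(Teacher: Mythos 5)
Your proposal is correct and takes essentially the same route as the paper: the $(1)\Rightarrow(2)$ direction invokes Corollary~\ref{Cor: if zig-zag then non-prime}, and the $(2)\Rightarrow(1)$ direction is settled by exhaustive computer verification over the finitely many multiply connected polyominoes of rank $\leq 14$ enumerated via the algorithm in \cite{MRR}, with primality tested in \texttt{Macaulay2}.

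The only difference is the order of the two computational filters: the paper first computes the subset $\mathrm{NP}$ of polyominoes with non-prime ideal and then verifies that every member of $\mathrm{NP}$ contains a zig-zag walk (i.e.\ ``not prime $\Rightarrow$ zig-zag''), whereas you first search for zig-zag walks and then run the primality test only on the polyominoes that pass this filter (i.e.\ ``no zig-zag $\Rightarrow$ prime''). These are contrapositives of one another and establish the same statement; your variant may even be slightly more economical, since the expensive \texttt{Macaulay2} computation is run only on the polyominoes surviving the cheap combinatorial filter. Your explicit appeal to the known primality of simple polyominoes to dispose of the non--multiply-connected case is also correct and is implicit in the paper's restriction of step (S1) to multiply connected polyominoes.
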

\begin{proof}
$(1) \Rightarrow (2)$ It is an immediate consequence of Corollary \ref{Cor: if zig-zag then non-prime}. \\
$(2) \Rightarrow (1)$ To prove the claim we have implemented a computer program that performs the following $3$ steps:
 \begin{enumerate}
 \item[(S1)] Compute the set of all multiply connected polyominoes with rank $\leq 14$, namely $P$.
 \item [(S2)] Compute the set of polyominoes  $\mathrm{NP}\subset P$ whose associated ideals are not primes. We used a routine developed in \texttt{Macaulay2} (see \cite{M2}).
 \item [(S3)] Verify that all polyominoes in $\mathrm{NP}$ have at least one zig-zag walk.
 \end{enumerate}
 We refer to \cite{MRR} for a complete description of the algorithm that we used.

\end{proof}  

\section{Grid Polyominoes}\label{sec:grid}
From a view point of finding a new class of prime polyomino ideals, due to Corollary \ref{Cor: if zig-zag then non-prime}, it is reasonable to consider multiply connected polyominoes with no zig-zag walks.  In this section, we consider polyominoes obtained subtracting some inner intervals by a given interval of $\NN^2$, similarly as done in \cite{HQ} and  \cite{Sh1}. But, if the cells are removed without a specific pattern, one can easily obtain a zig-zag walk in this case, too (see Figure \ref{fig:grid}(B)). Hence, we define an infinite family of polyominoes with no zig-zag walks by their intrinsic shape: the grid polyominoes.

\begin{definition}\label{def: grid}
Let $\MP \subseteq I:=[(1,1),(m,n)]$ be a polyomino such that
\[
\MP=I \setminus \{\MH_{ij}: i \in [r], \ j \in [s] \},
\]
where $\MH_{ij}=[a_{ij},b_{ij}]$, with $a_{ij} =((a_{ij})_1,( a_{ij})_2)$, $b_{ij} =((b_{ij})_1,( b_{ij})_2)$, $1<(a_{ij})_1 < (b_{ij})_1 < m$,  $1<(a_{ij})_2 < (b_{ij})_2 < n$, and 
\begin{enumerate}
\item for any $i \in [r]$ and $\ell,k \in [s]$ we have $(a_{i\ell})_1=(a_{ik})_1$ and $(b_{i\ell})_1=(b_{ik})_1$;
\item for any $j \in [s]$ and $\ell,k \in [r]$ we have $(a_{\ell j})_2=(a_{kj})_2$ and $(b_{\ell j})_2=(b_{kj})_2$;
\item for any $i \in [r-1]$ and $\ j\in [s-1]$, we have $(a_{i+1j})_1 = (b_{ij})_1 +1 $ and $(a_{ij+1})_2 = (b_{ij})_2 +1$.
\end{enumerate}
We call $\MP$ a \emph{grid polyomino}.
\end{definition}

  \begin{figure}[h!]
  \centering
    \resizebox{0.9\textwidth}{!}{
  \begin{tikzpicture}
  
\draw[thick] ( 0 ,0) --  ( 0 ,8);
\draw[thick] ( 1 ,0) --  ( 1 ,8);
\draw[thick] ( 2 ,0) --  ( 2 ,8);
\draw[thick] ( 3 ,0) --  ( 3 ,8);
\draw[thick] ( 4 ,0) --  ( 4 ,8);
\draw[thick] ( 5 ,0) --  ( 5 ,8);
\draw[thick] ( 6 ,0) --  ( 6 ,8);
\draw[thick] ( 7 ,0) --  ( 7 ,8);
\draw[thick] ( 8 ,0) --  ( 8 ,8);
\draw[thick] ( 9 ,0) --  ( 9 ,8);
\draw[thick] ( 10 ,0) --  ( 10 ,8);

\draw[thick] (0, 0 ) --  (10, 0 );
\draw[thick] (0, 1 ) --  (10, 1 );
\draw[thick] (0, 2 ) --  (10, 2 );
\draw[thick] (0, 3 ) --  (10, 3 );
\draw[thick] (0, 4 ) --  (10, 4 );
\draw[thick] (0, 5 ) --  (10, 5 );
\draw[thick] (0, 6 ) --  (10, 6 );
\draw[thick] (0, 7 ) --  (10, 7 );
\draw[thick] (0, 8 ) --  (10, 8);

\fill[fill=gray, fill opacity=0.3] (0,0) -- (10,0)-- (10,1) -- (0,1);
\fill[fill=gray, fill opacity=0.3] (0,7) -- (10,7)-- (10,8) -- (0,8);
\fill[fill=gray, fill opacity=0.3] (0,3) -- (10,3)-- (10,4) -- (0,4);
\fill[fill=gray, fill opacity=0.3] (0,1) -- (1,1)-- (1,3) -- (0,3);
\fill[fill=gray, fill opacity=0.3] (4,1) -- (5,1)-- (5,3) -- (4,3);
\fill[fill=gray, fill opacity=0.3] (6,1) -- (7,1)-- (7,3) -- (6,3);
\fill[fill=gray, fill opacity=0.3] (9,1) -- (10,1)-- (10,3) -- (9,3);
\fill[fill=gray, fill opacity=0.3] (0,4) -- (1,4)-- (1,7) -- (0,7);
\fill[fill=gray, fill opacity=0.3] (4,4) -- (5,4)-- (5,7) -- (4,7);
\fill[fill=gray, fill opacity=0.3] (6,4) -- (7,4)-- (7,7) -- (6,7);
\fill[fill=gray, fill opacity=0.3] (9,4) -- (10,4)-- (10,7) -- (9,7);

 %%%%%%

\draw[thick] (0+17 ,0) --  ( 0+17 ,5*1.6);
\draw[thick] ( 1.6+17 ,0) --  ( 1.6+17 ,5*1.6);
\draw[thick] ( 2*1.6+17 ,0) --  ( 2*1.6+17 ,5*1.6);
\draw[thick] ( 3*1.6+17 ,0) --  ( 3*1.6+17 ,5*1.6);
\draw[thick] ( 4*1.6+17 ,0) --  ( 4*1.6+17 ,5*1.6);
\draw[thick] ( 5*1.6+17 ,0) --  ( 5*1.6+17 ,5*1.6);

\draw[thick] (0+17, 0 ) --  (5*1.6+17, 0 );
\draw[thick] (0+17, 1*1.6 ) --  (5*1.6+17, 1 *1.6);
\draw[thick] (0+17, 2 *1.6) --  (5*1.6+17, 2 *1.6);
\draw[thick] (0+17, 3 *1.6) --  (5*1.6+17, 3*1.6 );
\draw[thick] (0+17, 4*1.6 ) --  (5*1.6+17, 4*1.6 );
\draw[thick] (0+17, 5 *1.6) --  (5*1.6+17, 5 *1.6);

\draw (5,-1) circle (0pt) node [anchor=north] {{\Huge(A) A grid polyomino}};

\fill[fill=gray, fill opacity=0.3] (0+17,0) -- (5*1.6+17,0)-- (5*1.6+17,1*1.6) -- (0+17,1*1.6);
\fill[fill=gray, fill opacity=0.3] (0+17,4*1.6) -- (5*1.6+17,4*1.6)-- (5*1.6+17,5*1.6) -- (0+17,5*1.6);
\fill[fill=gray, fill opacity=0.3] (0+17,1*1.6) -- (1*1.6+17,1*1.6)-- (1*1.6+17,2*1.6) -- (0+17,2*1.6);
\fill[fill=gray, fill opacity=0.3] (2*1.6+17,1*1.6) -- (3*1.6+17,1*1.6)-- (3*1.6+17,2*1.6) -- (2*1.6+17,2*1.6);
\fill[fill=gray, fill opacity=0.3] (4*1.6+17,1*1.6) -- (5*1.6+17,1*1.6)-- (5*1.6+17,2*1.6) -- (4*1.6+17,2*1.6);
\fill[fill=gray, fill opacity=0.3] (0+17,2*1.6) -- (2*1.6+17,2*1.6)-- (2*1.6+17,3*1.6) -- (0+17,3*1.6);
\fill[fill=gray, fill opacity=0.3] (3*1.6+17,2*1.6) -- (5*1.6+17,2*1.6)-- (5*1.6+17,3*1.6) -- (3*1.6+17,3*1.6);
\fill[fill=gray, fill opacity=0.3] (0+17,3*1.6) -- (1*1.6+17,3*1.6)-- (1*1.6+17,4*1.6) -- (0+17,4*1.6);
\fill[fill=gray, fill opacity=0.3] (2*1.6+17,3*1.6) -- (3*1.6+17,3*1.6)-- (3*1.6+17,4*1.6) -- (2*1.6+17,4*1.6);
\fill[fill=gray, fill opacity=0.3] (4*1.6+17,3*1.6) -- (5*1.6+17,3*1.6)-- (5*1.6+17,4*1.6) -- (4*1.6+17,4*1.6);
\draw (0+21,-1) circle (0pt) node [anchor=north] {{\Huge (B) A non-grid polyomino, with a zig-zag walk.}};

   \end{tikzpicture}}
   \caption{} \label{fig:grid}
   \end{figure}

Let $\MP$ be a grid polyomino and let $T_{\MP}$ and $J_{\MP}$ be the toric ring and the toric ideal associated to $\MP$, respectively, as defined in Section \ref{sec: toric}, where the hole $H_{ij}$ induces the subset $\MF_{i,j}$ and the variable $\omega_{i,j}$.  We claim that the grid polyominoes are primes. In order to prove this,  we are going to show that $I_{\MP} = J_{\MP}$. 

Let $f = f^+ - f^- \in J_{\MP}$, we define $V_+ = \{ v \in V(\MP) \mid x_v \text{ divides } f^+\}$, and, similarly, $V_- = \{ v \in V(\MP) \mid x_v \text{ divides } f^-\}$. A binomial $f$ in a binomial ideal $J$ is said to be \textit{redundant} if it can be expressed as a linear combination of binomials in $J$ of lower degree. A binomial is said to be \textit{irredundant} if it is not redundant. The following lemma, that has been stated in \cite{Sh1} but only for a family of polyominoes, holds also for any $J_{\MP}$, as defined in Section \ref{sec: toric}. Even if the proof is essentially the same of \cite[Lemma 2.2]{Sh1}, we report it for the sake of completeness.
\begin{lemma}\label{lemma: f red}
Let $f = f^+ - f^- \in J_{\MP}$ be a binomial of degree $\geq 3$. If there exist three vertices $p,q \in V_+$ and $r \in V_-$ such that $p,q$ are diagonal (resp. anti-diagonal) corners of an inner interval of $\MP$ and $r$ is one of the anti-diagonal (resp. diagonal) corners of the inner interval, then $f$ is redundant in $J_{\MP}$.
\end{lemma}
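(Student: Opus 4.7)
The plan is to use the hypothesis to extract an inner $2$-minor and rewrite $f$ as an $S$-linear combination of binomials of $J_\MP$ of strictly smaller degree. Let $s$ denote the fourth corner of the inner interval carrying $p,q,r$, so that, say, $p,q$ are diagonal corners and $r,s$ the anti-diagonal corners (the other case is symmetric). By construction the binomial $g := x_p x_q - x_r x_s$ is an inner $2$-minor of $\MP$, hence it belongs to $I_\MP \subseteq J_\MP$ and has degree $2$.

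Since $p,q \in V_+$ and $r \in V_-$, I may write $f^+ = x_p x_q\, m$ and $f^- = x_r\, m'$ for suitable monomials $m,m' \in S$ of degrees $\deg f - 2$ and $\deg f - 1$, respectively. A direct manipulation then gives
\[
f \;=\; x_p x_q\, m - x_r\, m' \;=\; (x_p x_q - x_r x_s)\, m + x_r(x_s m - m') \;=\; g\, m + x_r\, h,
\]
where $h := x_s m - m'$. Once I have shown $h \in J_\MP$, this decomposition exhibits $f$ as an $S$-linear combination of two binomials of $J_\MP$ of strictly smaller degree than $f$ (namely $g$, of degree $2 < \deg f$, and $h$, of degree $\deg f - 1$), and so $f$ is redundant by definition.

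The remaining task is to verify that $h \in J_\MP$. Applying $\varphi$, from $f \in J_\MP$ I obtain $\varphi(x_p x_q)\,\varphi(m) = \varphi(x_r)\,\varphi(m')$, while $g \in J_\MP$ yields $\varphi(x_p x_q) = \varphi(x_r x_s)$. Substituting the second identity into the first and simplifying produces $\varphi(x_r)\,\varphi(x_s m) = \varphi(x_r)\,\varphi(m')$. Since the codomain of $\varphi$ is a polynomial ring in the variables $\{h_i,v_j,w_k\}$, hence an integral domain, and $\varphi(x_r)$ is a nonzero monomial, I may cancel it to obtain $\varphi(x_s m) = \varphi(m')$; equivalently, $h \in \ker \varphi = J_\MP$. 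If instead $h = 0$, then already $f = g\,m$ and the redundancy is immediate.

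The only non-formal step is the cancellation of the monomial $\varphi(x_r)$, which rests on the fact that $T_\MP$ is embedded in a polynomial ring. With this observation in hand the argument is a routine generalisation of \cite[Lemma 2.2]{Sh1} from Shikama's original setting to the enlarged toric ring defined in Section \ref{sec: toric}, where the additional $w_k$-variables attached to the holes do not interfere with the cancellation.
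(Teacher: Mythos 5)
Your proof is correct and uses the same decomposition as the paper, $f = (x_px_q - x_rx_s)\,m + x_r\bigl(x_s m - m'\bigr)$ with $m = f^+/(x_px_q)$ and $m' = f^-/x_r$. The only cosmetic difference is that the paper concludes $h := x_s m - m' \in J_\MP$ by invoking primality of $J_\MP$ directly (with $x_r \notin J_\MP$), whereas you unwind that same fact by applying $\varphi$ and cancelling the nonzero monomial $\varphi(x_r)$ in the integral domain $T_\MP$ — two phrasings of the identical argument.
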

\begin{proof}
Let $s$ be the other corner of the inner interval determined by $p,q$ and $r$. Then 
\begin{align*}
f &= f^+ - f^- = x_px_q \frac{f^+}{x_px_q} - f^- \\
&= (x_px_q - x_rx_s) \frac{f^+}{x_px_q} + x_rx_s \frac{f^+}{x_px_q} - f^- \\
&=  (x_px_q - x_rx_s) \frac{f^+}{x_px_q} + x_r \left(x_s \frac{f^+}{x_px_q} - \frac{f^-}{x_r}\right).
\end{align*}
By Lemma \ref{Lemma: I_P subseteq J_P}, it holds $I_{\MP} \subseteq J_{\MP}$. Since $x_px_q - x_rx_s \in I_{\MP} \subseteq J_{\MP}$, and  $J_{\MP}$ is a prime ideal, then $x_s \frac{f^+}{x_px_q} - \frac{f^-}{x_r} \in J_{\MP}$, and the statement is proved. 
\end{proof}

Let $\MP$ be a grid polyomino, and let $\mathcal{H}_{ij}$, for $i\in [r]$ and $j \in [s]$, be its holes, enumerated as in Definition \ref{def: grid}. Fix $i \in [r]$ and $j \in [s]$, we denote by $\ML_{i,j}$ the set 

\[
\ML_{i,j} = \mathcal{F}_{i,j} \setminus \bigcup_{\substack{k \leq i \\ h \leq j \\ (h,k) \neq (i,j)}} \mathcal{F}_{h,k}.
\] 
   
In Figure \ref{fig: L22}, it is displayed an example of a set $\ML_{i,j}$. In particular, for the grid polyomino $\MP$ in figure, $\ML_{2,2}$ consists of all vertices of $\MP$ in the dark grey region.
   
 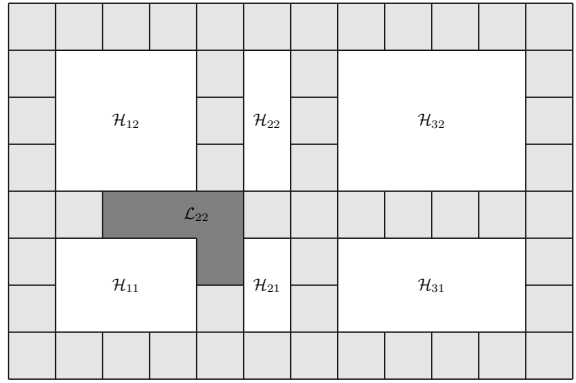
\begin{figure}[h!]
  \centering
\resizebox{0.6\textwidth}{!}{
  \begin{tikzpicture}
  \fill[fill=gray, fill opacity=1] (2,3) -- (2,4)-- (5,4) -- (5,3);
\fill[fill=gray, fill opacity=1] (4,2) -- (5,2)-- (5,3) -- (4,3);
\draw[thick] ( 0 ,0) --  ( 0 ,8);
\draw[thick] ( 1 ,0) --  ( 1 ,8);
\draw[thick] ( 2 ,0) --  ( 2 ,1);
\draw[thick] ( 2 ,3) --  ( 2 ,4);
\draw[thick] ( 2 ,7) --  ( 2 ,8);
\draw[thick] ( 3 ,0) --  ( 3,1);
\draw[thick] ( 3 ,7) --  ( 3 ,8);
\draw[thick] ( 4 ,0) --  ( 4 ,3);
\draw[thick] ( 4 ,4) --  ( 4 ,8);
\draw[thick] ( 5 ,0) --  ( 5 ,8);
\draw[thick] ( 6 ,0) --  ( 6 ,8);
\draw[thick] ( 7 ,0) --  ( 7 ,8);
\draw[thick] ( 8 ,0) --  ( 8 ,1);
\draw[thick] ( 8 ,3) --  ( 8 ,4);
\draw[thick] ( 8 ,7) --  ( 8 ,8);
\draw[thick] ( 9 ,0) --  ( 9 ,1);
\draw[thick] ( 9 ,3) --  ( 9 ,4);
\draw[thick] ( 9 ,7) --  ( 9 ,8);
\draw[thick] ( 10 ,0) --  ( 10 ,1);
\draw[thick] ( 10 ,3) --  ( 10 ,4);
\draw[thick] ( 10 ,7) --  ( 10 ,8);
\draw[thick] ( 11 ,0) --  ( 11 ,8);
\draw[thick] ( 12 ,0) --  ( 12 ,8);

\draw[thick] (0, 0 ) --  (12, 0 );
\draw[thick] (0, 1 ) --  (12, 1 );
\draw[thick] (0, 2 ) --  (1, 2 );
\draw[thick] (4, 2 ) --  (5, 2 );
\draw[thick] (6, 2 ) --  (7, 2 );
\draw[thick] (11, 2 ) --  (12, 2 );
\draw[thick] (0, 3 ) --  (4, 3 );
\draw[thick] (12, 3 ) --  (5, 3 );
\draw[thick] (0, 4 ) --  (12, 4 );
\draw[thick] (0, 5 ) --  (1, 5 );
\draw[thick] (4, 5 ) --  (5, 5 );
\draw[thick] (6, 5 ) --  (7, 5 );
\draw[thick] (11, 5 ) --  (12, 5 );
\draw[thick] (0, 6 ) --  (1, 6 );
\draw[thick] (4, 6 ) --  (5, 6 );
\draw[thick] (6, 6 ) --  (7, 6 );
\draw[thick] (11, 6 ) --  (12, 6 );
\draw[thick] (0, 7 ) --  (12, 7 );
\draw[thick] (0, 8 ) --  (12, 8);

\fill[fill=gray, fill opacity=0.2] (0,0) -- (12,0)-- (12,1) -- (0,1);
\fill[fill=gray, fill opacity=0.2] (0,7) -- (12,7)-- (12,8) -- (0,8);
\fill[fill=gray, fill opacity=0.2] (0,3) -- (12,3)-- (12,4) -- (0,4);
\fill[fill=gray, fill opacity=0.2] (0,1) -- (1,1)-- (1,3) -- (0,3);
\fill[fill=gray, fill opacity=0.2] (4,1) -- (5,1)-- (5,3) -- (4,3);
\fill[fill=gray, fill opacity=0.2] (6,1) -- (7,1)-- (7,3) -- (6,3);
\fill[fill=gray, fill opacity=0.2] (11,1) -- (12,1)-- (12,3) -- (11,3);
\fill[fill=gray, fill opacity=0.2] (0,4) -- (1,4)-- (1,7) -- (0,7);
\fill[fill=gray, fill opacity=0.2] (4,4) -- (5,4)-- (5,7) -- (4,7);
\fill[fill=gray, fill opacity=0.2] (6,4) -- (7,4)-- (7,7) -- (6,7);
\fill[fill=gray, fill opacity=0.2] (11,4) -- (12,4)-- (12,7) -- (11,7);

\node at (2.5,2) {$\mathcal{H}_{11}$};
\node at (5.5,2) {$\mathcal{H}_{21}$};
\node at (9,2) {$\mathcal{H}_{31}$};
\node at (2.5,5.5) {$\mathcal{H}_{12}$};
\node at (5.5,5.5) {$\mathcal{H}_{22}$};
\node at (9,5.5) {$\mathcal{H}_{32}$};
\node at (4,3.5) {$\mathcal{L}_{22}$};
   \end{tikzpicture}}
   \caption{An example of $\mathcal{L}_{i,j}$.}\label{fig: L22}
   \end{figure}      
   
\begin{lemma}\label{lemma: if v in L, then w in L}
Let $\MP$ be a grid polyomino. Let $f = f^+ - f^- \in J_{\MP}$. If $v \in V_+ \cap \ML_{i,j}$, for some $i\in [r]$ and $j \in [s]$,  then there exists $v' \in V_- \cap \ML_{i,j}$.
\end{lemma}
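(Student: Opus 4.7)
The plan is to translate the hypothesis $f\in J_\MP=\ker\varphi$ into an equality of $w$-multidegrees, and then extract the desired statement by a weighted inclusion--exclusion on the grid $[r]\times[s]$ of holes. Writing $f^\pm=\prod_{v\in V_\pm}x_v^{e_v^\pm}$, the first step is to observe that, by the very definition of $\alpha$, the variable $w_{hk}$ divides $\alpha(v)$ precisely when $v\in\MF_{hk}$; comparing $w_{hk}$-degrees on the two sides of $\varphi(f^+)=\varphi(f^-)$ therefore yields, for every $(h,k)\in[r]\times[s]$,
\[
N^+_{hk}\;:=\;\sum_{v\in\MF_{hk}}e_v^+\;=\;\sum_{v\in\MF_{hk}}e_v^-\;=:\;N^-_{hk}.
\]

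The second step is to unpack the grid conditions (1)--(2) of Definition \ref{def: grid}: they force $(a_{hk})_1$ to depend only on $h$ and $(a_{hk})_2$ only on $k$, so $\MF_{hk}=\{v\in V(\MP):v_1\leq\alpha_h,\;v_2\leq\beta_k\}$ for suitable constants $\alpha_h,\beta_k$. The family $\{\MF_{hk}\}$ is then nested along the product order on $[r]\times[s]$, and one checks directly that $\MF_{i-1,j}\cap\MF_{i,j-1}=\MF_{i-1,j-1}$ and that every $\MF_{hk}$ with $(h,k)<(i,j)$ sits inside $\MF_{i-1,j}\cup\MF_{i,j-1}$. With the conventions $\MF_{0,k}=\MF_{h,0}=\emptyset$, this reduces the definition of $\ML_{ij}$ to
\[
\ML_{ij}\;=\;\MF_{ij}\setminus(\MF_{i-1,j}\cup\MF_{i,j-1}).
\]

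The third step is the weighted inclusion--exclusion: setting $M^\pm_{ij}:=\sum_{v\in\ML_{ij}}e_v^\pm$, the set identity above gives
\[
M^\pm_{ij}\;=\;N^\pm_{ij}-N^\pm_{i-1,j}-N^\pm_{i,j-1}+N^\pm_{i-1,j-1},
\]
and combining with the equalities $N^+=N^-$ from the first step yields $M^+_{ij}=M^-_{ij}$. The hypothesis $v\in V_+\cap\ML_{ij}$ forces $M^+_{ij}\geq e_v^+\geq 1$, so $M^-_{ij}\geq 1$, and some $v'\in V_-\cap\ML_{ij}$ must exist.

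The main obstacle is the second step: one must verify that the grid alignment really does collapse $\bigcup_{(h,k)<(i,j)}\MF_{hk}$ to the two-term union $\MF_{i-1,j}\cup\MF_{i,j-1}$. This rigidity is exactly what conditions (1)--(2) buy us; once the set-theoretic identity is in hand, the remainder of the proof is a purely formal manipulation of $w$-exponents, whereas without the grid hypothesis the family $\{\MF_{hk}\}$ would not fit neatly into the product-order lattice and the four-term inclusion--exclusion would break down.
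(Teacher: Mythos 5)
Your proof is correct, and it takes a genuinely different route from the paper's. The paper argues iteratively: it picks a minimal pair $(i_1,j_1)$ with $V_+\cap\MF_{i_1,j_1}\neq\emptyset$, finds $v_1\in V_+\cap\ML_{i_1,j_1}$, uses $\omega_{i_1,j_1}\mid\varphi(f^+)=\varphi(f^-)$ to produce $v_1'\in V_-$, invokes minimality to force $v_1'\in\ML_{i_1,j_1}$, then "peels off" the common $\omega$-factors contributed by $v_1$ and $v_1'$ and repeats with $V_+\setminus\{v_1\}$. Your argument instead compares $w_{hk}$-multidegrees of $\varphi(f^+)$ and $\varphi(f^-)$ globally and runs a two-dimensional inclusion--exclusion, using the identities $\MF_{i-1,j}\cap\MF_{i,j-1}=\MF_{i-1,j-1}$ and $\ML_{ij}=\MF_{ij}\setminus(\MF_{i-1,j}\cup\MF_{i,j-1})$ that the grid alignment forces. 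Your version is arguably cleaner: it avoids the paper's appeal to a minimum over a partial order (which is not obviously well-defined without further argument), it delivers a quantitatively sharper conclusion (the $\ML_{ij}$-weighted multiplicities $M^+_{ij}$ and $M^-_{ij}$ are equal, not merely both nonzero), and it isolates exactly where the grid hypothesis enters, namely in collapsing $\bigcup_{(h,k)<(i,j)}\MF_{hk}$ to a two-term union. The paper's iterative scheme is closer in spirit to the matching picture one might want when later bounding degrees of generators, but for this lemma your global counting argument is both shorter and more transparent.
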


\begin{proof}
We prove the assertion showing that for all $(i,j)$ and any $v\in \ML_{i,j}$ with $v\in V_+$, there exists $v'\in V_-$ such that $v'\in \ML_{i,j}$. 
Let 
\[
(i_1, j_1) = \min \{(k,h) \mid  V_+ \cap \MF_{k,h}\neq \emptyset\}.
\]
If such a pair does not exist, there is nothing to prove. Otherwise, let $v_1 \in  V_+ \cap \ML_{i_1, j_1}$. Since $\omega_{i_1,j_1} \mid \varphi(f^+)$, then $\omega_{i_1,j_1} \mid \varphi(f^-)$. It follows there exists $v_1' \in V_- \cap \MF_{i_1, j_1}$. By the minimality of the pair $(i_1, j_1)$ and since $\varphi(f^+) = \varphi(f^-)$, $v_1' \in \ML_{i_1,j_1}$. Let 
\[
(i_2, j_2) = \min \{(k,h) \mid  (V_+\setminus \{v_1\}) \cap \MF_{k,h}\neq \emptyset\}.
\]
If such a pair does not exist, we have done. Otherwise, let $v_2 \in  (V_+\setminus \{v_1\}) \cap \ML_{i_2, j_2}$. 
%% Qui anche ho aggiunto qualche chiarimento.
We observe that because of the existence of $v_1$ and $v_1'$ we have the following equation
\[
 f=(\prod_{\substack{k \geq i_1\\ h \geq j_1}} \omega_{k,h}) g,
\]
where we have collected all $\omega_{k,h}$'s induced by $v_1$ and $v_1'$.
Because of the existence of $v_2$, we have that
\[
\omega_{i_2,j_2} \mid \varphi(g^+)= \varphi(g^-).
\] 
It follows there exists $v_2' \in (V_- \setminus \{v_1'\}) \cap \MF_{i_2, j_2}$. By the minimality of the pair $(i_2, j_2)$,  $v_2' \in \ML_{i_2,j_2}$. Iterating this procedure, the assertion follows.

\end{proof}

\begin{theorem}
Let $\MP$ be a grid polyomino. Then $I_{\MP} = J_{\MP}$. 
\end{theorem}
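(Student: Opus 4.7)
Plan. We show $J_\MP \subseteq I_\MP$; combined with Lemma~\ref{Lemma: I_P subseteq J_P} this proves the theorem. The strategy is to establish by induction on degree that every binomial $f = f^+ - f^- \in J_\MP$ of degree $\geq 3$ is redundant in the sense of Lemma~\ref{lemma: f red}; together with $(J_\MP)_2 = I_\MP$, this forces $J_\MP = I_\MP$. The base case, degree $2$, is Lemma~\ref{Lemma: I_P subseteq J_P}.

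For the inductive step fix $f \in J_\MP$ of degree $d \geq 3$ and suppose every binomial of $J_\MP$ of lower degree lies in $I_\MP$. If some variable $x_v$ divides both $f^+$ and $f^-$, factor it: $f = x_v g$ with $g$ a binomial of degree $d-1$, and since $J_\MP$ is prime we have $g \in J_\MP$, hence $g \in I_\MP$ by induction, so $f \in I_\MP$. Thus we may assume $V_+ \cap V_- = \emptyset$. By Lemma~\ref{lemma: f red} it suffices to exhibit an inner interval of $\MP$ with two diagonal (respectively anti-diagonal) corners in a common $V_{\pm}$ together with an opposite-type corner in $V_{\mp}$; this triple renders $f$ redundant and permits an appeal to the inductive hypothesis.

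To construct such a triple, pick $v \in V_+$ lying in some $\mathcal{L}_{i,j}$ and apply Lemma~\ref{lemma: if v in L, then w in L} to produce $v' \in V_- \cap \mathcal{L}_{i,j}$, necessarily distinct from $v$. The grid structure of Definition~\ref{def: grid} exhibits $\mathcal{L}_{i,j}$ as the vertex set of the union of two inner intervals of $\MP$: the horizontal spacer strip above the hole $\mathcal{H}_{i-1,j-1}$ and the vertical spacer strip to its right (with the obvious conventions on the boundary $i = 1$ or $j = 1$). A case analysis on the positions of $v$ and $v'$ within these two strips, using $\varphi(f^+) = \varphi(f^-)$ and irredundancy, either exhibits the triple of Lemma~\ref{lemma: f red} directly at an inner interval having $v$ and $v'$ among its corners, or else identifies a neighbouring $\mathcal{L}_{i',j'}$ to which the procedure is iterated. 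In the degenerate case in which every vertex of $f$ lies outside $\bigcup_{i,j} \mathcal{F}_{i,j}$, $f$ is supported on the simple sub-polyomino of $\MP$ above or to the right of every hole, and the known primality of simple polyominoes places $f \in I_\MP$.

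The main obstacle is the subcase in which $v$ and $v'$ sit in opposite strips of $\mathcal{L}_{i,j}$, separated by the hole $\mathcal{H}_{i-1,j-1}$: the axis-aligned rectangle they span passes through the interior of the hole and is not inner. The grid hypothesis of Definition~\ref{def: grid} --- precisely that the spacer rows and columns between consecutive holes have width exactly one and lie entirely in $\MP$ --- is what enables one to route around the hole through the spacer via a chain of inner $2$-minors, reduce to the previously treated case, and assemble the triple required by Lemma~\ref{lemma: f red}.
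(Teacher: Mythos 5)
Your overall plan matches the paper's: show $J_\MP\subseteq I_\MP$ by establishing that every binomial of degree $\geq 3$ in $J_\MP$ is redundant, using Lemma~\ref{lemma: f red} as the mechanism and Lemma~\ref{lemma: if v in L, then w in L} to pair off a vertex $v\in V_+\cap\ML_{i,j}$ with a $v'\in V_-\cap\ML_{i,j}$, then in the residual case reduce to a simple subpolyomino and quote \cite[Theorem 2.2]{QSS}. That skeleton is correct and is precisely the paper's.

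However, there is a genuine gap exactly where you flag ``the main obstacle.'' You observe that when $v$ and $v'$ sit in the two different arms of the L-shaped region $\ML_{i,j}$, the rectangle they span passes through the hole $\MH_{i-1,j-1}$, so Lemma~\ref{lemma: f red} cannot be applied directly to $v,v'$; you then assert the grid hypothesis lets one ``route around the hole through the spacer via a chain of inner $2$-minors'' without actually producing the chain. This is not a small finishing detail: it is the crux of the theorem, and the claim as stated is also slightly off target, because the paper does not route $v$ to $v'$ around the hole. What the paper actually does (its cases (2A) and (2B)) is bring in \emph{additional} vertices of $V_-$. Since $\varphi(f^+)=\varphi(f^-)$ forces the multiset of maximal horizontal and vertical edge intervals hit by $V_+$ and $V_-$ to agree, there exist $v_2',v_3'\in V_-$ sharing, respectively, a maximal vertical and a maximal horizontal edge interval with $v_1$. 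If $v_1$ is not a corner of a hole, the triple $v_1,v_2',v_3'$ already lies on an inner interval and Lemma~\ref{lemma: f red} applies. If both $v_1$ and $v_1'$ are diagonal corners of holes, the paper first rewrites $f$ modulo the inner $2$-minor $x_{v_1'}x_{v_2'}-x_gx_h$ (legitimate because $v_1',v_2'$ are corners of an inner interval by the width-one spacer condition), producing a new binomial $f'\in J_\MP$ of the same degree in which $g$ replaces $v_1'$, and \emph{then} applies Lemma~\ref{lemma: f red} to $v_1,v_3',g$. Your proposal never introduces these auxiliary vertices $v_2',v_3'$, never performs the one-step rewriting, and so does not actually exhibit the triple that Lemma~\ref{lemma: f red} requires. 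You also silently skip the easy but necessary preliminary case where $v_1$ and $v_1'$ lie on the same maximal edge interval. Until these case analyses are carried out, the proof is incomplete at its central step.
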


\begin{proof}
By Lemma \ref{Lemma: I_P subseteq J_P}, $I_{\MP} \subseteq J_{\MP}$. We have to prove the opposite inclusion, that is $J_{\MP} \subseteq I_{\MP}$. Since $(J_{\MP})_2 = I_{\MP}$, it suffices to prove that any irredundant binomial of $J_{\MP}$ is of degree 2. Let $f = f^+ - f^- \in J_{\MP}$, with $\deg(f) \geq 3$. Assume by contradiction that $f$ is irredundant. First, we show that there is no $v \in (V_+ \cup V_-) \cap \MF$, where $\MF = \bigcup_{i \in [r], j \in [s]} \MF_{i,j}$.  Assume by contradiction that there exists $v_1 \in (V_+ \cup V_-) \cap \MF$. In particular, $v_1 \in \ML_{i_1, j_1}$, for some $i_1 \in [r], j_1 \in [s]$. Without loss of generality, we may assume $v_1 \in V_+$. By Lemma \ref{lemma: if v in L, then w in L}, there exists $v_1' \in V_- \cap \ML_{i_1, j_1}$. Note that, by the condition (3) in Definition \ref{def: grid}, $v_1$ belongs to $V(\MP) \cap V(\MH_{ij})$, for some $i,j$. The same holds for $v_1'$. Assume $v_1 < v_1'$.  We have the following $3$ cases:
\begin{enumerate}
 \item[(1)] $v_1$ and $v_1'$ belong to the same maximal vertical (resp. horizontal) edge interval;
 \item[(2A)] at least one between $v_1$ and $v_1'$ is not a corner of an hole of $\MP$ (e.g., see Figure \ref{fig:v_1 and v_1'} (A));
  \item [(2B)] $v_1$ and $v_1'$ are both diagonal (or anti-diagonal) corners of some holes of $\MP$ (e.g., see Figure \ref{fig:v_1 and v_1'} (B)).
\end{enumerate}

 \ \\

(1) If $v_1$ and $v_1'$ belong to the same maximal vertical edge interval, there exists $v_2' \in V_-$ that lies on the same maximal horizontal edge interval of $v_1$. The vertices $v_1, v_1'$ and $v_2'$ are corners of an inner interval of $\MP$, and by Lemma \ref{lemma: f red}, $f$ is redundant, which is a contradiction. Similarly, one shows that $v_1$ and $v_1'$ do not belong to the same maximal horizontal edge interval. \\

%Let $v_1, v_1'$ be not in the same edge interval. We have to distinguish two cases: when $v_1$ and $v_1'$ are both diagonal (or anti-diagonal) corners of some holes of $\MP$ (e.g., see Figure \ref{fig:v_1 and v_1'} (A)), and when at least one between $v_1$ and $v_1'$ is not a corner of an hole of $\MP$ (e.g., see Figure \ref{fig:v_1 and v_1'} (B)). 

\begin{figure}[h!]
\begin{subfigure}{0.5\textwidth}
\resizebox{0.9\textwidth}{!}{

\begin{tikzpicture}
\draw[dashed] (0,3)--(1,3);
\draw (1,3)--(3,3);
\draw[dashed] (3,0)--(3,1);
\draw (3,1)--(3,3);
\draw[dashed] (0,4)--(1,4);
\draw (1,4)--(3,4);
\draw[dashed] (3,6)--(3,7);
\draw (3,4)--(3,6);

\draw[dashed] (6,3)--(7,3);
\draw (4,3)--(6,3);
\draw[dashed] (4,0)--(4,1);
\draw (4,1)--(4,3);
\draw[dashed] (6,4)--(7,4);
\draw (4,4)--(6,4);
\draw[dashed] (4,7)--(4,6);
\draw (4,4)--(4,6);

\filldraw (2,4) circle (2.5pt) node[anchor=south]{$v_1$};
\filldraw (4,2) circle (2.5pt) node[anchor=west]{$v'_1$};
\node at (1.5,1.5) {$\mathcal{H}_{i_{1}-1 j_{1}-1}$};
\node at (1.5,5.5) {$\mathcal{H}_{i_{1}-1 j_{1}}$};
\node at (5.5,5.5) {$\mathcal{H}_{i_{1} j_{1}}$};
\node at (5.5,1.5) {$\mathcal{H}_{i_{1} j_{1}-1}$};
\end{tikzpicture}}\caption{}
\end{subfigure}%
\begin{subfigure}{0.5\textwidth}
\resizebox{0.9\textwidth}{!}{
\begin{tikzpicture}
\draw[dashed] (0,3)--(1,3);
\draw (1,3)--(3,3);
\draw[dashed] (3,0)--(3,1);
\draw (3,1)--(3,3);
\draw[dashed] (0,4)--(1,4);
\draw (1,4)--(3,4);
\draw[dashed] (3,6)--(3,7);
\draw (3,4)--(3,6);

\draw[dashed] (6,3)--(7,3);
\draw (4,3)--(6,3);
\draw[dashed] (4,0)--(4,1);
\draw (4,1)--(4,3);
\draw[dashed] (6,4)--(7,4);
\draw (4,4)--(6,4);
\draw[dashed] (4,7)--(4,6);
\draw (4,4)--(4,6);

\filldraw (3,4) circle (2.5pt) node[anchor=west]{$v_1$};
\filldraw (4,3) circle (2.5pt) node[anchor=south]{$v'_1$};
\node at (1.5,1.5) {$\mathcal{H}_{i_{1}-1 j_{1}-1}$};
\node at (1.5,5.5) {$\mathcal{H}_{i_{1}-1 j_{1}}$};
\node at (5.5,5.5) {$\mathcal{H}_{i_{1} j_{1}}$};
\node at (5.5,1.5) {$\mathcal{H}_{i_{1} j_{1}-1}$};
\end{tikzpicture}}\caption{}
\end{subfigure}\caption{}\label{fig:v_1 and v_1'}
\end{figure}

(2A) We assume that at least one between $v_1$ and $v_1'$ is not a corner of an hole of $\MP$, we say $v_1$. Denote by $v_2'$ and $v_3'$ the vertices in $V_-$ that belong to the same horizontal and vertical edge interval of $v_1$, respectively. The vertices $v_1, v_2', v_3'$ are corners of an inner interval of $\MP$, hence by applying Lemma \ref{lemma: f red}  to  $v_1, v_2', v_3'$ we obtain that $f$ is redundant, which is a contradiction. 

(2B) We denote by $v_2'$ the vertex in $V_-$ that belongs to the same vertical edge interval of $v_1$. The vertices $v_1'$ and $v_2'$ are diagonal (or anti-diagonal) corners of an inner interval of $\MP$. Denote by $g,h$ the other two corners, where $g$ is the one on the same horizontal edge interval of $v_1'$. Then the binomial $x_{v_1'}x_{v_2'} - x_gx_h \in J_{\MP}$, and 
\begin{align*}
f &= f^+-f^- = f^+ - x_{v_1'}x_{v_2'} \frac{f^-}{x_{v_1'}x_{v_2'}}  \\
&= f^+ - x_hx_g \left(\frac{f^-}{x_{v_1'}x_{v_2'}}\right)  - (x_{v_1'}x_{v_2'} - x_gx_h)\frac{f^-}{x_{v_1'}x_{v_2'}} \\
&= f' - (x_{v_1'}x_{v_2'} - x_gx_h)\frac{f^-}{x_{v_1'}x_{v_2'}} . 
\end{align*}
Let $v_3'$ be the vertex in $V_-$ that belongs to the same horizontal edge interval of $v_1$. The vertices $v_1, v_3'$, and $g$ are corners of an inner interval of $\MP$. Since $f' \in J_{\MP}$, by applying Lemma \ref{lemma: f red} to $v_1, v_3'$ and $g$, we obtain that $f'$ is redundant, and then also $f$ is redundant, which is a contradiction. 

It follows that the vertices appearing in $V_+ \cup V_-$ do not belong to $\MF$. This means $f \in J_{\MP} \cap \mathbb{K}[x_v \mid v \in V(\MP) \setminus \MF]$. Let $\MP'$ be the subpolyomino of $\MP$ which consists of all cells of $\MP$ having no vertices belonging to $\MF$. $\MP'$ is a simple polyomino and $I_{\MP'} = I_{\MP} \cap \mathbb{K}[x_v \mid v \in V(\MP) \setminus \MF]$. Note that $\alpha(v)$, for every $v \in V(\MP)\setminus \MF$, is a monomial of degree 2 determined by the maximal horizontal and vertical edge intervals to which $v$ belongs. Then, by \cite[Theorem 2.2]{QSS}, $I_{\MP'} = J_{\MP'}= J_{\MP} \cap \mathbb{K}[x_v \mid v \in V(\MP) \setminus \MF]$. Hence, if $f$ is irredundant in $J_{\MP}$, then it is also irredundant in $J_{\MP} \cap \mathbb{K}[x_v \mid v \in V(\MP) \setminus \MF]$. But $I_{\MP'}$ is generated by binomials of degree 2, then $f$ is redundant in $I_{\MP'}$, and then in $J_{\MP} \cap \mathbb{K}[x_v \mid v \in V(\MP) \setminus \MF]$, which is a contradiction. 
\end{proof}

\begin{corollary}\label{cor: grid prime}
Let $\MP$ be a grid polyomino. Then $I_{\MP}$ is prime. 
\end{corollary}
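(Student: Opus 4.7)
The plan is to invoke the theorem proved immediately before the corollary, which establishes that $I_{\MP} = J_{\MP}$ whenever $\MP$ is a grid polyomino. Once that equality is in hand, primality reduces to a general fact about toric ideals.

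More precisely, I would first recall from Section \ref{sec: toric} that the ideal $J_{\MP}$ was defined as the kernel of the ring homomorphism $\varphi\colon S \to T_{\MP}$, where $T_{\MP} \subseteq \mathbb{K}[\{h_i, v_j, w_k\}]$ is a subalgebra of a polynomial ring over the field $\mathbb{K}$. Since any polynomial ring over a field is an integral domain, and any subring of a domain is again a domain, $T_{\MP}$ is an integral domain. Therefore $S/J_{\MP} \cong \varphi(S) \subseteq T_{\MP}$ is a domain, which means that $J_{\MP}$ is a prime ideal of $S$.

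Then by the preceding theorem, $I_{\MP} = J_{\MP}$, so $I_{\MP}$ itself is prime. There is no real obstacle here: all of the work has already been done in proving $I_{\MP} = J_{\MP}$, and the corollary is just the observation that toric ideals are automatically prime. The proof is essentially a one-line citation of the theorem together with this standard fact.
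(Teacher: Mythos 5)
Your proof is correct and is exactly the paper's intended argument: the paper already notes in Section~\ref{sec: toric} that $J_{\MP}$ is prime by definition (being the kernel of a map into a domain), so once $I_{\MP} = J_{\MP}$ is established the corollary is immediate. Nothing further is needed.
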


From the main results of this work, that are Corollary \ref{Cor: if zig-zag then non-prime}, Theorem \ref{Theo: rank <= 14} and Corollary \ref{cor: grid prime}, it arises naturally the following:

\begin{conjecture}
  Let $\MP$ be a polyomino. The following conditions are equivalent:
  \begin{enumerate}      
    \item the polyomino ideal $I_\MP$ is prime;
  \item $\MP$ contains no zig-zag walks.
\end{enumerate}
\end{conjecture}

\end{document}